\newcounter{ictr}
\newenvironment{ilist}{\begin{list}
                         {\textup{(\alph{ictr})}}
                         {\usecounter{ictr}
                          \setlength{\leftmargin}{0.4truein}
                          \setlength{\itemsep}{0.0truein}
                          \setlength{\labelwidth}{0.3truein}}}
                      {\end{list}}
\newcounter{nctr}
\newenvironment{nlist}{\begin{list}
                         {\textup{(\arabic{nctr})}}
                         {\usecounter{nctr}
                          \setlength{\leftmargin}{0.6truein}
                          \setlength{\itemsep}{0.0truein}
                          \setlength{\labelwidth}{0.3truein}}}
                      {\end{list}}
\newtheorem*{thma}{Theorem A}
\newtheorem*{thmb}{Theorem B}
\newtheorem{thm}{Theorem}[section]
\newtheorem{lem}[thm]{Lemma}
\newtheorem{prop}[thm]{Proposition}
\theoremstyle{definition}
\theoremstyle{remark}
\newtheorem*{ex}{Example}
\newtheorem*{rem}{Remark}
\numberwithin{equation}{section}
\newcommand{\h}{\mathcal{H}}
\newcommand{\N}{\mathbb{N}}
\newcommand{\eps}{\varepsilon}
\newcommand{\normi}[1]{\left\|#1\right\|}
\DeclareMathOperator{\Prob}{Prob}
\DeclareMathOperator{\supp}{supp}
\DeclareMathOperator{\CAT}{CAT}
\newcommand{\Hawaii}{Hawai\kern.05em`\kern.05em\relax i}
\newcommand{\Manoa}{M\=anoa}
\title{Complexes and exactness of certain Artin groups.}
\author{Erik Guentner}
\address{University of \Hawaii~at \Manoa, 
Department of Mathematics,
2565 McCarthy Mall, Honolulu, HI 96822}
\email{erik@math.hawaii.edu}
\author{Graham A. Niblo}
\address{School of Mathematics, University of Southampton, Highfield,
  Southampton, SO17 1SH, England} 
\email{G.A.Niblo@soton.ac.uk}
\thanks{The first author was partially supported by NSF grant
  DMS-0349367. The second author was supported by EPSRC grant
  EP/H04874X/1.} 
\begin{document}

\begin{abstract}
  In his work on the Novikov conjecture, Yu introduced Property $A$ as
  a readily verified criterion implying coarse embeddability.  Studied
  subsequently as a property in its own right, Property $A$ for a
  discrete group is known to be equivalent to exactness of the
  reduced group $C^*$-algebra and to the amenability of the action of the
  group on its Stone-Cech compactification.  In this paper we study
  exactness for groups acting on a finite dimensional $\CAT(0)$ cube
  complex.  We apply our methods to show that Artin groups of type FC
  are exact.  While many discrete groups are known to be exact the
  question of whether every Artin group is exact remains open.
\end{abstract}

\maketitle

\section{Introduction}

A discrete metric space $X$ has {\it Property $A$\/} if there exists a
sequence of families of finitely supported probability measures
$f_{n,x}\in\ell^1(X)$, indexed by $x\in X$, and a sequence of
constants $S_n>0$, such that:
\begin{nlist}
  \item  For every $n$ and $x$ the function $f_{n,x}$ is supported in
                 $B_{S_n}(x)$.
  \item  For every $R>0$, we have
\begin{equation*}
     \normi{f_{n,x}-f_{n,x'}}\to 0
\end{equation*}
uniformly on the set $\{(x,x') : d(x,x')\leq R\}$ as $n\to \infty$.
\end{nlist}
A discrete group has Property $A$ if its underlying proper
metric space does (this is independent of the choice of proper
metric).  In this case the definition is
recognized as a non-equivariant form of the Reiter condition
for amenability.  

For groups it transpires that Property $A$ is equivalent to a wide
variety of other conditions including exactness of the reduced group
$C^*$-algebra, $C^*$-exactness of the group itself (defined in terms
of crossed products) and amenability of the action of the group on its
Stone-Cech compactification \cite{higson-roe,ozawa}.  The class of groups
possessing Property $A$ is large and diverse -- for example, it
contains every amenable group, every linear group and every hyperbolic
group, and is closed under many natural operations \cite{GHW, kirchberg-wassermann,DG}.  In
this article we shall for groups use the terms {\it Property $A$\/}
and {\it exactness\/} interchangeably.

In previous work, in collaboration with J.~Brodzki, S.~Campbell and
N.~Wright, we showed that a finite dimensional $\CAT(0)$ cube complex
has Property $A$ \cite{BCGNW}.  For the proof we constructed an
explicit family of weight functions which, when suitably normalised,
become the functions $f_{n,x}$ in the definition above.  As a
consequence a group acting (metrically) properly on a finite
dimensional $\CAT(0)$ cube complex is exact. In particular all
finitely generated right-angled Artin groups are exact. Since an infinitely generated Artin group is the ascending union of its finitely generated parabolic subgroups any countable 
right angled Artin group is exact.

In this paper we carry out an extended study of the weight functions,
as defined on a suitable compact space combinatorially defined in
terms of the hyperplanes and half spaces of the complex.  Our analysis
of their topological and measure theoretic properties leads to a new
inheritance property for exact groups.  Indeed, while true that a
group acting on a {\it locally finite\/} Property $A$ space with exact
stabilisers is exact, the analogous statement for general non-locally
finite spaces is false.  In order to guarantee inheritence in the more
general context one needs to assert control over coarse stabilisers --
point stabilisers are not sufficient. In our setting, following an
idea of Ozawa \cite{ozawa}, extension of the weight functions to a compact
space affords the required additional control.  We obtain the
following result:

\begin{thma}
  A countable discrete group acting on a finite
  dimensional $\CAT(0)$ cube complex is exact if and
  only if each vertex stabilizer of the action is exact.
\end{thma}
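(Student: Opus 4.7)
The plan is to prove the two implications separately. The forward implication is routine: exactness (equivalently, Property $A$) passes to subgroups, and each vertex stabilizer is a subgroup of $G$, so exactness of $G$ immediately forces exactness of every vertex stabilizer. The substantive direction is the converse, and for that I would invoke Ozawa's characterization of exactness as amenability of the $G$-action on some compact $G$-space.

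Concretely, I would take the compact space $Z$ that the introduction flags as combinatorially defined in terms of the hyperplanes and half-spaces of the cube complex $X$, and extend the BCGNW weight functions to $Z$ to produce a sequence of continuous, almost-equivariant maps $\mu_n\colon Z \to \Prob(V)$, where $V$ is the vertex set of $X$. This witnesses amenability of the $G$-action on $Z$ \emph{up to} vertex stabilizers. To promote this to genuine amenability of a $G$-action, I would combine $Z$ with the compact spaces on which the vertex stabilizers act amenably (provided by their exactness together with Ozawa's theorem), assembling a fibered compact $G$-space $W \to Z$ whose fiber over a point $z$ is governed by a vertex stabilizer associated to $z$. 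Composing the almost-equivariant measures on $V$ with the fiberwise amenability data should yield continuous maps $W \to \Prob(G)$ that are themselves $G$-almost-equivariant, whence $G$ is exact.

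The main obstacle, and the technical heart of the paper, is the need to work with \emph{coarse} stabilizers rather than point stabilizers, since the cube complex need not be locally finite. This is precisely why the extension of the weight functions to the compact space $Z$ is essential: its topological and measure-theoretic properties must be engineered so that the supports of the approximating measures on $Z$ are concentrated within $G$-translates of finitely many vertex stabilizers, up to bounded error, making the fibered construction compatible with the $G$-action. Carrying out this compatibility rigorously, and verifying that the measures produced on $W$ are continuous and genuinely $G$-almost-equivariant, is where I expect the main work to lie.
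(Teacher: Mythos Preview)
Your forward implication is fine and matches the paper exactly.

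For the converse, your outline is in the right spirit but follows a genuinely different route from the paper. You propose to build a fibered compact $G$-space $W \to Z$ and verify amenability of the $G$-action on $W$; this is essentially Ozawa's original strategy. The paper explicitly \emph{avoids} this formulation (see the remark at the end of Section~\ref{sec:permanence}: ``We have chosen to avoid the formulation in terms of amenable actions because the method seldom produces a reasonable space''). Instead, the paper works directly with the functional characterization of Property~$A$ given at the start of Section~\ref{sec:permanence}: one needs $\mu:\Gamma\to\Prob(\Gamma)$ with uniformly finite support and approximate equivariance. The compact space $\overline X$ is used only inside the proof of Lemma~\ref{Borel2continuous}, where Lusin's theorem on $\overline X$ converts the (merely Borel) weight functions into a map $\eta:X\to\Prob(X)$ supported in a single \emph{fixed} finite set $F\subset X$. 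Once that is in hand, the paper never touches $\overline X$ again: it lifts the Property~$A$ functions of each stabilizer $\Gamma^t$ to $\nu^t:\Gamma\to\Prob(\Gamma)$ via a coset-representative splitting (Lemma~\ref{induction}), and then writes down an explicit convolution-type formula $\mu_x(g)=\sum_{t\in T}\eta_{x\cdot O}(g\cdot t)\,\nu^t_{z_g^{-1}x}(a_g)$, checking support, normalization, and almost-invariance by hand.

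What each approach buys: your fibered-space route is more conceptual and closer to Ozawa's original paper, but you would still need to confront the non-continuity of the weight functions on $\overline X$ (Propositions~\ref{middle} and~\ref{notcont} show they are only Borel when $X$ is not locally finite), and assembling $W$ equivariantly from per-stabilizer compacta requires some bookkeeping you have not spelled out. The paper's approach trades that topological packaging for a longer but entirely elementary $\ell^1$-estimate; its key simplification is that the \emph{uniform} finite support $F$ of the $\eta$'s reduces everything to finitely many stabilizers $\Gamma^t$, $t\in T_F$, and finitely many coset representatives, so the ``coarse stabilizer'' issue you correctly identify is handled combinatorially rather than topologically.
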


As an application, we offer the following result in which we do not assume the Artin group is finitely generated.

\begin{thmb}
An Artin group of type FC is exact.
\end{thmb}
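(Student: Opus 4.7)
The plan is to derive Theorem B from Theorem A by exhibiting an action of an FC type Artin group on a finite dimensional $\CAT(0)$ cube complex with exact vertex stabilizers.

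I would first handle the finitely generated case. For a finitely generated Artin group $A$ of type FC with Coxeter generating set $S$, I would work with the modified Deligne complex $D_A$ of Charney and Davis. Its cells are indexed by chains of cosets of spherical (finite type) parabolic subgroups $A_T \le A$; Charney and Davis proved that the FC hypothesis is exactly what is needed to make $D_A$ globally $\CAT(0)$. Since $S$ is finite, the dimension of $D_A$ is bounded by the maximum cardinality of a spherical subset of $S$. I would then verify that $D_A$, or a natural $A$-equivariant subdivision, carries a $\CAT(0)$ cube complex structure on which $A$ acts cellularly.

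The vertex stabilizers of $A \curvearrowright D_A$ are conjugates $gA_Tg^{-1}$ of spherical parabolic subgroups. Finite type Artin groups are linear, by the theorems of Digne and of Cohen--Wales extending the Bigelow--Krammer result for braid groups, and linear groups are exact by Guentner--Higson--Weinberger. Hence every vertex stabilizer is exact, and Theorem A delivers exactness of $A$ in the finitely generated case.

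For the general (possibly infinitely generated) case, I would observe that every Artin group is the directed union of its finitely generated parabolic subgroups, each of which inherits type FC from the ambient group. Each such subgroup is exact by the previous step, and exactness is preserved under countable ascending unions of groups, so $A$ itself is exact. This last reduction mirrors the analogous step for right-angled Artin groups mentioned in the introduction.

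The main obstacle will be confirming the $\CAT(0)$ cube complex structure used in the first step. The Deligne complex is traditionally presented as a piecewise Euclidean simplicial complex, so one must either produce an $A$-equivariant cubical subdivision or substitute an alternative cubical model whose vertex stabilizers are still conjugates of spherical parabolic subgroups. The FC hypothesis enters twice: it ensures that $D_A$ (or its cubical avatar) is globally $\CAT(0)$, and it guarantees that every vertex stabilizer is of finite type, hence linear and thus exact.
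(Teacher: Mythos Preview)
Your proposal is correct and follows essentially the same route as the paper: invoke the Charney--Davis complex, use linearity of finite-type Artin groups (Cohen--Wales, Digne) together with Guentner--Higson--Weinberger to get exact vertex stabilisers, apply Theorem~A, and handle the infinitely generated case by passing to the directed union of finitely generated parabolic subgroups. Your stated ``main obstacle'' is not in fact an obstacle: Charney and Davis already endow their complex with a natural cubical structure (rather than merely a simplicial one), and show that the developed cover is a $\CAT(0)$ cube complex precisely under the FC hypothesis, so no additional subdivision step is needed.
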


We note that Altobelli characterised the Artin groups of type FC as
the smallest class of Artin groups containing the Artin groups of
finite type which is closed under amalgamations along parabolic
subgroups \cite{Altobelli}.  Thus, this theorem could alternately be
obtained by appealing to the stability theorem for graph products of
exact groups first established in \cite{G}.  (See also \cite{DG} for a
more modern discussion.) However the class of groups acting on
$\CAT(0)$ cube complexes is considerably richer than the class of
groups acting on trees and we expect Theorem A to have many other
applications.

The paper is organised as follows. In Section~\ref{cubical} we recall
the definition and basic properties of a $\CAT(0)$ cube complex, with an
emphasis on the combinatorics of vertices, hyperplanes and half
spaces. We describe a compact space in which the vertex set of the
complex embeds, and give an explicit description of the points of this
space. In Section~\ref{weightfunctions} we recall the definition of
the weight functions from \cite{BCGNW} and analyse their topological
and measure theoretic properties.  In Section~\ref{sec:permanence},
adapting slightly the method of Ozawa \cite{ozawa}, we establish
Theorem~A.  Section~\ref{artin} contains relevant background on Artin
groups, and a discussion of Theorem~B.

\section{Cubical complexes}\label{cubical}

A {\it $\CAT(0)$ cube complex\/} is a cell complex in which each cell
is a Euclidean cube of side length $1$ and the attaching maps are
isometries; the complex is equipped in the usual way with a
geodesic metric which is required to satisfy the $\CAT(0)$ condition
of non-positive curvature. It follows that a $\CAT(0)$ cube complex is
simply connected, even contractible, as a topological space. 

The midpoint of each edge of a $\CAT(0)$ cube complex defines a {\it
  hyperplane\/} -- the union of all geodesics passing through the
midpoint at right angles to the underlying edge, the angle being
measured in the local Euclidean metric on each cube. Each hyperplane
is a totally geodesic codimension one subspace which is locally
separating, and therefore globally separating since the complex is
simply connected.

In this paper we shall be concerned exclusively with the combinatorics
of the vertices, hyperplanes and half spaces of a $\CAT(0)$ cube
complex.  We shall now outline the facts we require -- we refer to
\cite{ChatterjiNiblo}, and the standard references \cite{BH,roller}
for additional details. 

Let $X$ be a $\CAT(0)$ cube complex.  Slightly abusing notation, we
shall denote the set of vertices of the complex by $X$ as well. Each
hyperplane decomposes the vertex set into two subsets, the two {\it
  half spaces} determined by the hyperplane. There is {\it a priori\/} no
reason to prefer one of these half spaces over the other and we shall
adopt the following convention: {\it fix\/} a base vertex, and for
each hyperplane $H$ denote by $H^+$ the half space containing the base
vertex; the complementary half space is denoted $H^-$. A hyperplane
$H$ {\it separates\/} two vertices if one belongs to $H^+$ and the
other to $H^-$.

Let $x$ and $y$ be vertices in $X$.  The {\it interval between $x$ and
  $y$\/} is the intersection of the half spaces containing both $x$
and $y$; it is a finite set which we shall denote $[x,y]$.  It follows
directly from the definition that a vertex belongs to $[x,y]$
exactly when there are no hyperplanes which separate it from both $x$
and $y$.  A useful alternate description of the interval is the
following: $[x,y]$ consists of those vertices in $X$ which lie on an
edge geodesic joining $x$ and $y$.  Observe that $[x,x] = \{\, x \,\}$.
Given three vertices $x,y,z$ the intersection 
$[x,y]\cap [y,z]\cap [z,x]$ is comprised of a single vertex; we denote
this vertex by $m(x,y,z)$ and refer to it as the
{\it median\/} of $x$, $y$, $z$.

Let now $\h$ denote the set of hyperplanes in $X$.  Each vertex $x$
determines a function $\h\to \{\, \pm 1 \,\}$ according to the rule
\begin{equation*}
  x(H) = \begin{cases}
             +1, & x\in H^+ \\
             -1, & x\in H^-.
         \end{cases}
\end{equation*}
Observe that $x(H)= -1$ precisely when $H$ separates $x$ and the
fixed base vertex. 
While the notation appears clumsy, it is chosen for convenience in the
following statement: for every vertex $x$ and hyperplane $H$ we see
that $x$ belongs to the half space $H^{x(H)}$. (Here, we are
implicitly writing $H^{+1}$ for $H^+$, and similarly for $H^-$.)
We denote by $\{\, \pm 1 \,\}^{\h}$ the {\it Hamming cube\/} on 
$\mathcal H$,
that is, the set of 
functions $\h\to \{\, \pm 1 \,\}$ equipped with the infinite product
topology. We obtain by the above  a map
\begin{equation*}
   X \to  \{\, \pm 1 \,\}^{\h} \,.
\end{equation*}
Any two (distinct) vertices are separated by at least one
hyperplane and if $H$ separates $x$ and $y$ then $x(H)\neq y(H)$.
Thus, this map is injective.  We identify $X$ with its image, the
subset of {\it original vertices\/}.

An element $z$ of the Hamming cube 
is an {\it admissible vertex\/}
if for every two hyperplanes $H$ and $K$ there exists an original
vertex $x$ for which both $x(H)=z(H)$ and $x(K)=z(K)$.  Equivalently,
$z$ is admissible if for every $H$ and $K$ the half spaces $H^{z(H)}$
and $K^{z(K)}$ have non-empty intersection.  Clearly, an original
vertex is admissible.  Admissible vertices that are {\it not\/}
original vertices are  {\it  ideal vertices\/}.

We pause briefly to consider an example.

\begin{ex} 
  The Euclidean plane equipped with its usual integer lattice squaring
  is a $\CAT(0)$ cube complex of dimension two.  The vertices are the
  integer lattice points. The hyperplanes are the horizontal and
  vertical lines intersecting the axes at half-integer points:
  \begin{align*}
    H_n \quad &\colon \quad y=n+\tfrac{1}{2} \\
    K_n \quad &\colon \quad x=n+\tfrac{1}{2},
 \end{align*}
 for an integer $n$.  Fix $(0,0)$ as the base vertex.  

The lattice point $(p,q)$ in the first quadrant defines an original
vertex by 
\begin{equation*}
  (p,q)(H_n) = \begin{cases}
   -1, & \text{when $0\leq n< q$} \\
   +1, &\text{else},
 \end{cases}
\qquad
  (p,q)(K_n) = \begin{cases}
   -1, & \text{when $0\leq n< p$} \\
   +1, &\text{else}.
 \end{cases}
\end{equation*}
There are also ideal vertices.  For example, we may orient all
horizontal lines to point upwards and all vertical lines to point to
the right defining an admissable vertex $(+\infty, +\infty)$
by 
\begin{equation*}
  (+\infty, +\infty)(H_n) =
  \begin{cases}
    +1, & \text{when $n< 0$} \\ -1, & \text{else},
  \end{cases}
\qquad 
  (+\infty, +\infty)(K_n) =
  \begin{cases}
    +1, & \text{when $n< 0$} \\ -1, & \text{else}.
  \end{cases}  
\end{equation*}
We shall think of this vertex as ``the top right corner'' of the
plane.  The full set of ideal vertices comprises four corner points,
and four lines -- one each at the East, West, North and South of the
plane -- as illustrated in Figure~\ref{fig:plane} below.
\end{ex}

\begin{figure}[h] 
    \centering
    \includegraphics[width=5in]{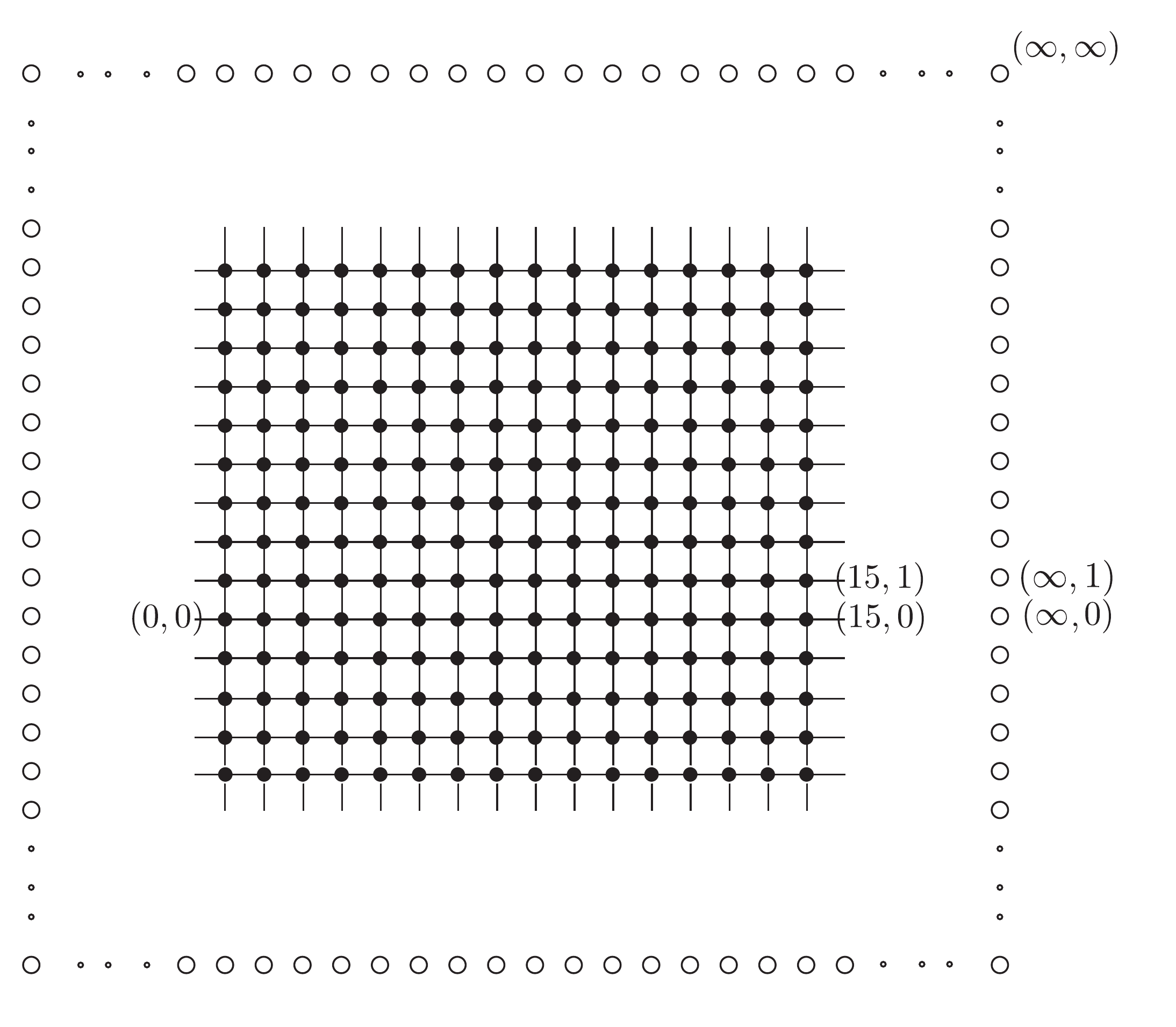} 
    \caption{The Euclidean plane with admissable vertices attached}
    \label{fig:plane}
\end{figure}

\begin{lem}\label{intersects}
  An element $z$ of the Hamming cube is an admissible vertex if and
  only if for every  $n\geq 2$ and every collection of $n$
  hyperplanes $H_1,\dots,H_n$ there exists an original vertex $x$
  satisfying $x(H_i)=z(H_i)$, for each $i=1,\dots,n$. 
\end{lem}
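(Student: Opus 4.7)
The plan is to reduce the statement to pairwise intersection (which is exactly admissibility) by an induction on $n$ that exploits the median operation introduced earlier in the section.

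The backward implication is immediate: specialising the $n$-wise statement to $n=2$ recovers admissibility. So the real content is the forward implication, and I would prove it by induction on $n$, the base case $n=2$ being the definition of admissibility. For the inductive step, suppose the conclusion holds for any $n-1$ hyperplanes and consider hyperplanes $H_1,\dots,H_n$ with $n\ge 3$. Applying the inductive hypothesis to $H_1,\dots,H_{n-1}$ and to $H_2,\dots,H_n$ produces original vertices $u,v\in X$ with $u(H_i)=z(H_i)$ for $1\le i\le n-1$ and $v(H_i)=z(H_i)$ for $2\le i\le n$. Applying admissibility (the $n=2$ case) to the pair $H_1,H_n$ produces an original vertex $w\in X$ with $w(H_1)=z(H_1)$ and $w(H_n)=z(H_n)$.

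I then form the median $w^\ast=m(u,v,w)$. The key observation is the \emph{majority principle for the median}: for every hyperplane $H$ the value $w^\ast(H)\in\{\pm 1\}$ agrees with the majority of $u(H),v(H),w(H)$. This follows directly from the characterisation of the median as the unique vertex lying in each of the three intervals $[u,v]$, $[v,w]$, $[w,u]$, combined with the fact that a vertex belongs to $[x,y]$ precisely when no hyperplane separates it from both $x$ and $y$. Indeed, for any hyperplane $H$ at least two of $u,v,w$ lie on the same side, and the constraint of belonging to the two intervals emanating from each of those two vertices forces $w^\ast$ onto that side as well.

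Granted the majority principle, the verification is immediate: at $H_1$ the values $u(H_1)$ and $w(H_1)$ both equal $z(H_1)$; at $H_n$ the values $v(H_n)$ and $w(H_n)$ both equal $z(H_n)$; and for $2\le i\le n-1$ both $u(H_i)$ and $v(H_i)$ equal $z(H_i)$. In each case the majority of $u(H_i),v(H_i),w(H_i)$ is $z(H_i)$, so $w^\ast(H_i)=z(H_i)$ for all $i$, completing the inductive step.

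The only non-cosmetic step is the majority principle for medians, which I expect to dispatch in one or two lines from the interval-theoretic definition recalled just before the lemma; beyond that, everything is bookkeeping. I do not anticipate a substantial obstacle, since the combinatorial median structure on a $\CAT(0)$ cube complex is precisely the tool designed to run exactly this Helly-type argument.
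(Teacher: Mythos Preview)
Your argument is correct and is essentially identical to the paper's proof: induction on $n$, with the inductive step producing three original vertices agreeing with $z$ on $\{H_1,\dots,H_{n-1}\}$, $\{H_2,\dots,H_n\}$, and $\{H_1,H_n\}$ respectively, and then taking their median. The only difference is expository---you spell out the majority principle for the median, whereas the paper simply asserts that the median has the desired property.
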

\begin{proof}
  We are concerned with the forward implication, which we prove by
  induction on $n$.  The case $n=2$ is covered by the definition of an
  admissible vertex.   Let $n>2$ and let $H_1,\dots,H_n$
  be $n$ hyperplanes.  By the induction hypothesis we have an original
  vertex $x_1$ agreeing with $z$ on
  $H_1,\dots,H_{n-1}$, another original vertex $x_2$ agreeing with $z$
  on $H_2,\dots,H_n$, 
  and a third original vertex $x_3$ agreeing with $z$ on $H_1$ and
  $H_n$.  The median $m(x_1, x_2, x_3)$ has the desired property.  
\end{proof}

\begin{prop}\label{closure}
  The closure $\overline{X}$ of the set of original vertices is the
  set of admissible vertices. 
\end{prop}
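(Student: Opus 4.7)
The plan is to prove the two inclusions separately. Both are essentially unwindings of the definition of the product topology on $\{\pm 1\}^{\h}$, where a basic open neighborhood of a point $z$ is determined by specifying values on some finite collection of hyperplanes.

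For the inclusion $\overline X \subseteq \{\text{admissible vertices}\}$, I would start with an arbitrary $z\in \overline X$ and two hyperplanes $H, K$. The set $\{w : w(H)=z(H),\ w(K)=z(K)\}$ is a basic open neighborhood of $z$, hence contains some original vertex $x$. This $x$ then witnesses the defining property of admissibility for $z$ at the pair $H,K$. Since $H,K$ were arbitrary, $z$ is admissible. (In fact the same argument shows at once that admissibility is a closed condition in the Hamming cube.)

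For the reverse inclusion, the idea is to use Lemma~\ref{intersects} to approximate. Given an admissible vertex $z$, consider any basic open neighborhood $U$ of $z$; by definition of the product topology, $U$ is determined by a finite set of hyperplanes $H_1,\dots,H_n$, namely $U=\{w : w(H_i)=z(H_i),\ i=1,\dots,n\}$. Lemma~\ref{intersects} produces an original vertex $x$ with $x(H_i)=z(H_i)$ for all $i$, so $x\in U\cap X$. This shows every open neighborhood of $z$ meets $X$, whence $z\in\overline X$.

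Neither step looks to pose any real obstacle — the only substantive content is already packaged in Lemma~\ref{intersects}, which upgrades the pairwise definition of admissibility to an $n$-wise statement matching exactly the form of a basic open set in the product topology. The proposition is thus mostly a translation between the combinatorial definition of admissibility and the topology on $\{\pm 1\}^{\h}$.
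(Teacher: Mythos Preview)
Your proposal is correct and matches the paper's own proof essentially word for word: the paper also uses Lemma~\ref{intersects} to show every basic open neighborhood of an admissible vertex contains an original vertex, and handles the converse by noting that the pairwise condition $x(H)=z(H)$, $x(K)=z(K)$ defines an open neighborhood of $z$ which must therefore meet $X$. The only difference is the order in which you treat the two inclusions.
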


\begin{proof}
  Lemma~\ref{intersects} shows that every basic open neighborhood of an
  admissible vertex contains an original vertex. Thus, every
  admissible vertex is in the closure of the original vertices.

  Conversely, suppose $z$ belongs to the closure of the original
  vertices, and let $H$ and $K$ be hyperplanes. The requirements
  $x(H)=z(H)$ and $x(K)=z(K)$ define an open neighborhood of $z$ in
  the infinite product, so must contain an original vertex. Hence $z$
  is admissable.
\end{proof}

We extend the above terminology regarding hyperplanes
and half spaces to $\overline X$ in the obvious way.  For example, an
admissible vertex $z$ belongs to the half space $H^+$ if $z(H)=+1$; it
belongs to $H^-$ if $z(H)=-1$.  Thus, we extend the half spaces to
include ideal vertices.
Having extended the notion of half space
to the set of admissable vertices we define intervals exactly as
before, as intersections of half spaces.  In later sections we shall
work onlywith intervals $[x,z]$ in which $x$ is an original vertex,
whereas $z$ may be either an original or an ideal vertex.

A pair of admissible vertices $z$ and $w$ are separated by the
hyperplane $H$ when $z(H)\neq w(H)$.  While only finitely many
hyperplanes may separate a pair of original vertices, a pair of
vertices at least one of which is ideal may be separated by infinitely
many hyperplanes. For example, in Figure~\ref{fig:plane} the
ideal vertices $(\infty, 0)$ and $(\infty, 1)$ are separated by a
single hyperplane, whereas the ideal vertices $(\infty, 0)$ and
$(\infty, \infty)$ are separated by infinitely many horizontal
hyperplanes.

A pair of admissable vertices $z$ and $w$ are {\it adjacent across the
  hyperplane $H$\/} if they differ only on $H$.  An admissable vertex
$z$ is {\it adjacent to the hyperplane $H$} if there is an admissable
vertex $w$ such that $z$ is adjacent to $w$ across $H$.

\begin{prop}
  Let $x$, $y$ and $z$ be admissible vertices.  The element of the
  Hamming cube defined by
  \begin{equation*}
    m(H) = \begin{cases}
      +1, &\text{at least two of\/ $x(H)$, $y(H)$, $z(H)=1$} \\
      -1, &\text{at least two of\/ $x(H)$, $y(H)$, $z(H)=-1$}
    \end{cases}
  \end{equation*}
is an admissible vertex.  It is the unique admissible vertex
belonging to all three of the intervals $[x,y]$, $[y,z]$ and $[x,z]$.
\end{prop}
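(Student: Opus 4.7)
The plan is to verify three things in turn: that $m$ is admissible, that it lies in each of the three intervals, and that it is uniquely determined by this property.

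First I would establish admissibility of $m$. By definition, we need, for any two hyperplanes $H$ and $K$, an original vertex agreeing with $m$ on both. The point is a pigeonhole argument on the three admissible vertices $x,y,z$. By construction, at least two of $\{x(H),y(H),z(H)\}$ equal $m(H)$ and at least two of $\{x(K),y(K),z(K)\}$ equal $m(K)$. Since we have only three vertices, some single vertex among $x,y,z$ must agree with $m$ on both $H$ and $K$; call it $u$. Since $u$ is itself admissible, applied to $H,K$ it yields an original vertex $x_0$ with $x_0(H)=u(H)=m(H)$ and $x_0(K)=u(K)=m(K)$, as required. Equivalently, by Proposition~\ref{closure}, one could phrase this as showing every basic open neighborhood of $m$ meets $X$.

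Second, I would verify that $m\in[x,y]\cap[y,z]\cap[x,z]$. Since intervals for admissible vertices are intersections of half spaces (as defined in the paragraph following Proposition~\ref{closure}), membership of $m$ in $[x,y]$ is the assertion that $x(H)=y(H)$ implies $m(H)=x(H)$. This is immediate: if $x(H)=y(H)=+1$, then at least two of the three values equal $+1$, so by definition $m(H)=+1$, and symmetrically for $-1$. The same reasoning handles $[y,z]$ and $[x,z]$.

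For uniqueness, suppose $m'$ is any admissible vertex belonging to all three intervals. Fix a hyperplane $H$ and consider the value $m(H)$; without loss of generality $m(H)=+1$, which means at least two of $x(H),y(H),z(H)$ equal $+1$. Say $x(H)=y(H)=+1$. Because $m'\in[x,y]$, we must have $m'(H)=+1=m(H)$. As $H$ was arbitrary, $m'=m$.

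I do not expect any step to be a serious obstacle; the core insight is that three vertices and two hyperplanes fit neatly into a pigeonhole argument, and the rest is book-keeping. The only subtlety to watch is that the definition of admissibility quantifies over pairs of hyperplanes only, which is exactly what the pigeonhole delivers — any attempt to verify admissibility directly via $n$-fold intersections would require first invoking Lemma~\ref{intersects}, which is unnecessary here.
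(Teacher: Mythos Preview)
Your proof is correct and matches the paper's argument essentially line for line: the same pigeonhole step for admissibility, the same observation that $x(H)=y(H)$ forces $m(H)=x(H)$ for interval membership (the paper phrases this contrapositively), and the same uniqueness argument. There is nothing to add.
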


\begin{proof}
  We first check that $m$ is admissible.  Suppose hyperplanes $H$ and
  $K$ are given.  At least two of the vertices $x$, $y$, $z$ must
  agree with $m$ on $H$ and at least two must agree with $m$ on $K$,
  so at least one agrees with $m$ on both $H$ and $K$. Since that
  vertex is itself admissable there is an original vertex which agrees
  with $m$ on both $H$ and $K$.

We next check that $m$ belongs to the interval $[x,y]$.  Indeed, if
$H$ separates $m$ from both $x$ and $y$ then $x(H) = y(H)\neq m(H)$,
contradicting the definition of $m$.  The other intervals are treated
similarly.

Finally, we verify uniqueness.  Suppose $m'$ is an admissible vertex
belonging to each of the intervals $[x,y]$, $[y,z]$ and $[x,z]$.
Given a hyperplane $H$ at least two of the vertices
$x$, $y$ and $z$ belong to a common half space of $H$.  Thus, $m'$
agrees with at least two of the vertices $x$, $y$ and $z$ on $H$ so
that $m'$ agrees with $m$ on $H$ as well.  As the hyperplane $H$ was
arbitrary, we conclude that $m'=m$.
\end{proof}

The proposition extends the notion of median to admissible vertices: 
the admissible vertex $m$ described in the statement is the {\it
  median\/} of the three admissable vertices $x$, $y$ and $z$; as with
medians of original vertices we write $m=m(x,y,z)$.

We close this section with some elementary remarks concerning the
topological space $\overline X$. Each half space is a clopen set. The
collection of finite intersections of half spaces comprises a basis
for the topology on $\overline X$.  For an admissible vertex $z$, the
singleton $\{\, z \,\}$ is closed; if $z$ is an ideal vertex 
$\{\, z \,\}$ is {\it not\/} open.  For original vertices the
situation is more complicated.

\begin{prop}
  Let $x$ be an original vertex.  The following are equivalent:
  \begin{nlist}
    \item $\{\, x \,\}$ is open in $\overline X$;
    \item $\{\, x \,\}$ is open in $X$ with respect to  the subspace topology;
    \item $x$ is a finite vertex.
  \end{nlist}
\end{prop}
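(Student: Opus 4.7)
The plan is to establish the cycle $(1)\Rightarrow(2)\Rightarrow(3)\Rightarrow(1)$, understanding a \emph{finite vertex} to mean an original vertex adjacent to only finitely many hyperplanes (equivalently, of finite valence in the $1$-skeleton of $X$, since each edge at $x$ crosses a unique hyperplane). The first implication is immediate from the definition of the subspace topology: if $\{x\}$ is open in $\overline X$, then $\{x\}=\{x\}\cap X$ is open in $X$.

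For $(2)\Rightarrow(3)$ I would argue by contraposition. Suppose $x$ is adjacent to infinitely many distinct hyperplanes $K_1,K_2,\ldots$. For each $j$ the edge at $x$ crossing $K_j$ has as its other endpoint an original vertex $y_j$ differing from $x$ only on $K_j$, so the $y_j$ are pairwise distinct. Any basic open neighborhood of $x$ in $\overline X$ has the form $U=\bigcap_{i=1}^k H_i^{x(H_i)}$ for some finite collection of hyperplanes; choosing $j$ with $K_j\notin\{H_1,\ldots,H_k\}$ places $y_j$ in $(U\cap X)\setminus\{x\}$, so $\{x\}$ cannot be open in the subspace topology on $X$.

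The substantive direction is $(3)\Rightarrow(1)$. Assume the hyperplanes adjacent to $x$ are exactly $H_1,\ldots,H_k$, and consider the basic open neighborhood $U=\bigcap_{i=1}^k H_i^{x(H_i)}$ of $x$ in $\overline X$. I claim $U=\{x\}$. Suppose for contradiction that $z\in U$ differs from $x$ and choose a hyperplane $H$ with $z(H)\neq x(H)$; necessarily $H\notin\{H_1,\ldots,H_k\}$. Lemma~\ref{intersects} applied to the finite collection $\{H,H_1,\ldots,H_k\}$ yields an original vertex $y$ satisfying $y(H)=z(H)\neq x(H)$ and $y(H_i)=z(H_i)=x(H_i)$ for every $i$. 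Thus $y\neq x$, yet no hyperplane adjacent to $x$ separates $x$ from $y$.

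The main obstacle is deriving a contradiction from this last configuration; it is the only place the argument must move beyond combinatorial admissibility. It rests on the standard CAT(0)-cubical fact that whenever two original vertices are separated by some hyperplane, the first edge of any edge geodesic between them crosses a hyperplane adjacent to the starting vertex which also separates them. I would cite this from \cite{ChatterjiNiblo,roller}, or recover it from the property that an edge geodesic in a CAT(0) cube complex crosses each separating hyperplane exactly once, so the first such crossing is across a hyperplane adjacent to $x$. This forces some $H_j$ to satisfy $y(H_j)\neq x(H_j)$, contradicting the construction of $y$ and completing the proof.
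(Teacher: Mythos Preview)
Your proof is correct and follows essentially the same approach as the paper's: both establish $(3)\Rightarrow(1)$ by showing that the basic open set $U=\bigcap_i H_i^{x(H_i)}$ equals $\{x\}$, reducing via Lemma~\ref{intersects} to the claim that no \emph{original} vertex other than $x$ lies in $U$, and appealing to the combinatorics of edge geodesics for that claim. The paper simply calls this last step ``an elementary fact'' and separates the original and ideal cases, whereas you treat all admissible $z$ uniformly and supply the geodesic justification explicitly; otherwise the arguments coincide.
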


\noindent
Here, an {\it original\/} vertex is  said to be {\it finite\/} if there
are only finitely many hyperplanes adjacent to it.
It follows that, in the case of a non-locally finite complex, $X$ itself
has non-trivial topology as a subspace of $\overline X$ -- that is, the
subspace topology on $X$ is not discrete.

\begin{proof}
  Elementary topology shows that (1) implies (2), and (2) implies (3).
  If $x$ is a finite vertex, and $H_1,\dots,H_n$ are the (finitely
  many) hyperplanes adjacent to $x$ then we claim that 
  \begin{equation}
  \label{singleton}
    \{\, x \,\} = H_1^{x(H_1)} \cap \dots \cap H_n^{x(H_n)},
  \end{equation}
  which is a basic open set for the topology on $\overline X$.  To
  verify (\ref{singleton}) we must, according to our conventions, show
  that no admissible vertex other than $x$ can belong to the displayed
  intersection of half spaces.  
  It is an elementary fact that the intersection
  can contain no original vertex other than $x$.  Thus, we must show
  that the intersection can contain no ideal vertex.
  Suppose that $z$ is an ideal vertex which agrees with $x$ on the
  given hyperplanes.  Necessarily, $z$ differs from $x$ on some other
  hyperplane $K$.  By Lemma~\ref{intersects} there is an original
  vertex $y$ which agrees with $z$ on the hyperplanes $H_1,\dots, H_n$,
  and also on $K$.  Thus, $y$ is an original vertex that agrees with
  $x$ on $H_1,\dots,H_n$ but differs from it on $K$, a contradiction.
\end{proof}

While $\overline X$ is a compact space containing $X$ as a dense
subspace, it is {\it not\/} in general a compactification of $X$ in the classical
sense -- when $X$ is not locally finite it {\it need not\/} be an open
subset of $\overline X$.  We shall not require this fact below, and
its verification is left to the reader.  (But, compare to the
discussion surrounding
Propositions~\ref{middle} and \ref{notcont}.)

\begin{prop} 
  The compact space $\overline X$ contains $X$ as a dense subspace.  
  An action of a discrete group on $X$ by cellular automorphisms
  extends to an action
  on $\overline X$ by homeomorphisms. 
\end{prop}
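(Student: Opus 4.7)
The plan is to split the proposition into three assertions: compactness of $\overline X$, density of $X$ in $\overline X$, and extension of the action.

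Compactness and density are essentially free. By Tychonoff's theorem the Hamming cube $\{\, \pm 1 \,\}^{\h}$ is compact; by Proposition~\ref{closure} the space $\overline X$ is precisely the topological closure of $X$ inside it; hence $\overline X$ is closed in a compact space, so compact, and it contains $X$ as a dense subspace by construction.

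For the action the main subtlety is that the embedding $X\hookrightarrow\{\, \pm 1 \,\}^{\h}$ depends on the base vertex $x_0$, since the label $H^+$ was defined as the half-space of $H$ containing $x_0$. A cellular automorphism $g$ need not fix $x_0$, so the naive formula $(g\cdot z)(H)=z(g^{-1}H)$ is wrong. I would recast $\overline X$ base-point-freely as the set of assignments $\sigma$ giving to each hyperplane $H\in\h$ one of its two half-spaces $\sigma(H)$, and satisfying the pairwise intersection condition $\sigma(H)\cap\sigma(K)\neq\emptyset$ for all $H,K\in\h$. The correspondence with the Hamming-cube description is $z\leftrightarrow\sigma_z$ with $\sigma_z(H)=H^{z(H)}$; admissibility in the sense of the text is exactly the pairwise-intersection condition.

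A cellular automorphism $g$ acts canonically on hyperplanes and on half-spaces (it maps edge-midpoints to edge-midpoints, and preserves the connected components of their complements) and it preserves intersections of half-spaces. One therefore defines
\begin{equation*}
(g\cdot\sigma)(H):=g\cdot\sigma(g^{-1}H),
\end{equation*}
and verifies: (i) the pairwise intersection condition is preserved, so the formula really lands in $\overline X$; (ii) this is a group action, which is automatic from functoriality of the action on half-spaces; (iii) it agrees with the original $G$-action on $X$, because the half-space of $H$ containing $gx$ is exactly the $g$-image of the half-space of $g^{-1}H$ containing $x$; and (iv) each $g$ acts as a homeomorphism, which holds because the action permutes the basis of $\overline X$ consisting of finite intersections of half-spaces and hence is continuous, with continuous inverse $g^{-1}$. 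The only real obstacle is the orientation bookkeeping in translating between the Hamming-cube description and the intrinsic half-space description; once that is in place everything else is routine.
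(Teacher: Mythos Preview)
Your proposal is correct and follows the same approach as the paper, which disposes of density via Lemma~\ref{intersects} (equivalently Proposition~\ref{closure}) and handles the action extension by observing that a cellular automorphism preserves the half-space structure. Your treatment is considerably more detailed than the paper's two-sentence proof; in particular, your explicit handling of the basepoint dependence and the intrinsic half-space reformulation make precise exactly what the paper leaves to the reader.
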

\begin{proof}
  Open sets in $\overline X$ are unions of finite intersections
  of half spaces all of which contain original vertices by Lemma
  \ref{intersects}, so $X$ is  dense in $\overline{X}$ as required.
  An automorphism of $X$ preserves the half space structure and
  therefore extends to a homeomorphism of
  $\overline X$. 
\end{proof}

\section{Weight functions}\label{weightfunctions}

Let $X$ be the vertex set of a {\it finite dimensional\/} $\CAT(0)$
cube complex. In previous work we constructed weight functions on $X$
-- we used these to show that $X$ has Property $A$, when viewed as a
metric space with either of its natural metrics \cite{BCGNW}. We shall use the
previously constructed weight functions in the present context as
well, and now recall their definition.

Fix an ambient dimension $N$ greater than or equal to the dimension of
the complex. For every $z\in \overline X$ and every vertex $a\in
[x,z]\cap X$ the {\it deficiency of\/ $a$ \textup{(}relative to the
  interval $[x,z]$\textup{)}\/} is 
\begin{equation}
\label{def}
  \delta_{[x,z]}(a)=N-k, 
\end{equation}
where $k$ is the number of hyperplanes cutting edges adjacent to $a$
and which separate $a$ (and hence also $x$) from $z$. By hypothesis
$0\leq\delta_{[x,z]}(a)\leq N$. Now for every vertex $x\in X$ and
every $z\in \overline X$ we define the {\it weight function\/}
$\phi^n_{x,z}$ according to the formula 
\begin{equation}
\label{weights}
  \phi^n_{x,z}(a) = \begin{cases}
           {n-d(x,a)+\delta_{[x,z]}(a) \choose\delta_{[x,z]}(a)}, &a\in [x,z] \\
           0, &a\notin [x,z].
  \end{cases}
\end{equation}
Intuitively $\phi^n_{x,z}$ measures the flow of a mass placed at the
vertex $x$ as it flows towards $z$ with $n$ playing the role of the
time parameter.  The basic properties of the weight functions are
summarized in the following theorem \cite{BCGNW}.  In the statement,
$B_n(x)$ denotes the {\it ball of radius $n$ and center $x$\/},
comprised of those (original) vertices separated from $x$ by at most
$n$ hyperplanes; the norms are $\ell_1$-norms.

\begin{thm}
\label{oldthm}
  Let $X$ be the vertex set of a finite dimensional $\CAT(0)$ cube
  complex, and let $\overline X$ be the compact space of admissible
  vertices, defined previously. Fix an ambient dimension $N$ not less
  than the dimension of $X$.  The weight functions
  \begin{equation*}
    \phi^n: X \times \overline X \to \ell_1(X), 
              \quad (x,z)\mapsto \phi^n_{x,z}
  \end{equation*}
defined by formula \textup{(}\ref{weights}\textup{)} satisfy the following: 
\begin{nlist}
  \item $\phi^n_{x,z}$ is $\N\cup \{0\}$-valued;
  \item $\phi^n_{x,z}$ is supported in $B_n(x)\cap [x,z]$;
  \item $\| \phi^n_{x,z} \| = {n+N\choose N}$;
  \item if $x$ and $x'\in X$ are adjacent then 
            $\| \phi^n_{x,z}-\phi^n_{x'z} \| = 2 {n+N-1\choose N-1}$.
\end{nlist}
Further, if a discrete group $\Gamma$ acts cellularly on $X$, hence
also by homeomorphisms on $\overline X$, we have 
\begin{nlist}\setcounter{nctr}{4}
  \item $s\cdot \phi^n_{x,z} = \phi^n_{sx,sz}$ ,
\end{nlist}
for every $s\in\Gamma$.
\end{thm}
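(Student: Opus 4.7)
The plan is to verify the five properties directly from formula (\ref{weights}). For $z \in X$ these are established in \cite{BCGNW}; the only genuinely new content is to confirm that admissibility of $z$ is enough for the same arguments.

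First I would check that the deficiency $\delta_{[x,z]}(a) \in \{0, \ldots, N\}$ is well-defined for every $z \in \overline X$. Given two hyperplanes $H, K$ adjacent to $a$ and separating $a$ from $z$, the vertex $a$, its $H$-flip, its $K$-flip, and an original vertex (supplied by admissibility of $z$) agreeing with $z$ on both $H$ and $K$ collectively populate the four quadrants cut out by $H$ and $K$. Hence $H$ and $K$ cross, and more generally any collection of hyperplanes adjacent to $a$ and separating it from $z$ pairwise crosses and so spans a cube at $a$; by finite dimensionality there are at most $N$ of them. With this bound, (1) and (2) are immediate from (\ref{weights}) and the convention $\binom{m}{k}=0$ for $0\leq m<k$. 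Property (5) is automatic, since a cellular automorphism of $X$ preserves the hyperplane/half-space structure and therefore preserves distances, intervals and deficiencies.

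For (3) I would induct on the time parameter $n$. The base case $n=0$ collapses to $\binom{\delta(x)}{\delta(x)} = 1 = \binom{N}{N}$. For the inductive step, Pascal's identity $\binom{m}{k}=\binom{m-1}{k}+\binom{m-1}{k-1}$ splits $\phi^n_{x,z}(a)$ into $\phi^{n-1}_{x,z}(a)$ plus a correction, and the sum of corrections must match $\binom{n+N}{N}-\binom{n-1+N}{N}=\binom{n-1+N}{N-1}$; the match is achieved by bijecting corrections at vertices of $[x,z]\cap B_{n-1}(x)$ with contributions from their forward neighbors, exactly as in \cite{BCGNW}. Property (4) proceeds in parallel: when $x$ and $x'$ differ across a single hyperplane $H$, the functions $\phi^n_{x,z}$ and $\phi^n_{x',z}$ are compared term by term on the common portions of $[x,z]$ and $[x',z]$, and the difference telescopes via Pascal to $2\binom{n+N-1}{N-1}$.

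Passing from $z \in X$ to $z \in \overline X$ is essentially formal. The support of $\phi^n_{x,z}$ is always finite --- at each vertex of $[x,z]\cap B_n(x)$ there are at most $N$ forward edges by the deficiency bound, so the support contains at most $(N+1)^n$ vertices. All combinatorial steps above are local, and each depends on $z$ only through finitely many hyperplane coordinates. The main obstacle, such as it is, lies in the bookkeeping for the deficiency when $a$ is adjacent to infinitely many hyperplanes: one must keep in mind that $\delta_{[x,z]}(a)$ depends only on the (at most $N$) hyperplanes adjacent to $a$ that separate $a$ from $z$, not on the full (possibly infinite) set of hyperplanes adjacent to $a$. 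Once that point is internalised, the \cite{BCGNW} arguments carry over verbatim.
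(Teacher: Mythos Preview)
Your proposal is correct and follows the same route as the paper: properties (1), (2) and (5) are read directly off the defining formula, while (3) and (4) are delegated to the computations in \cite{BCGNW}. You are in fact more careful than the paper, which simply cites \cite{BCGNW} for (3) and (4) without commenting on the passage from $z\in X$ to $z\in\overline X$; your paragraph checking that the deficiency is bounded by $N$ for admissible $z$ and that the \cite{BCGNW} combinatorics depend on $z$ only through finitely many half-space coordinates fills exactly the gap the paper leaves implicit.
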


\begin{proof}
  Properties, (1) and (2) are immediate from the defining formula
  (\ref{weights}). 
  Property (5) is also apparent from the defining formula -- indeed,
  it is equivalent to the assertion that
  \begin{equation*}
    \phi^n_{x,z}(a) = \phi^n_{sx,sz}(sa),
  \end{equation*}
  for all $x$, $a\in X$, $z\in \overline X$ and $s\in \Gamma$, which holds
  since $\Gamma$ acts cellularly and the weight functions are
  determined by the combinatorics of hyperplanes.
  Finally, properties (3) and (4) are established in Propositions 2.3
  and 2.4 of \cite{BCGNW}. 
\end{proof}

The remainder of the section is devoted to an analysis of the
continuity properties of the weight functions defined in
(\ref{weights}).  In particular, we shall view $\phi^n_{x,z}(a)$, as a
function of  
$z\in\overline X$,  {\it for a fixed natural number\/ $n$, and for
  fixed $x$ and $a\in X$\/}.  Our first result in this direction is
the following proposition.

\begin{prop}
\label{borel}
  Fix a natural number $n$, and original vertices $x$ and $a\in X$.
  The function
  \begin{equation*}
    \Phi : \overline X \to \N, \qquad
      \Phi(z) = \phi^n_{x,z}(a).
  \end{equation*}
satisfies the following:
\begin{nlist}
  \item if $n\leq d(x,a)$ then $\Phi$ is continuous;
  \item if $n> d(x,a)$ and
    \begin{ilist}
          \item $a$ is finite then $\Phi$ is continuous;
          \item $a$ is {\it not\/} finite then $\Phi$ is Borel.
    \end{ilist}
\end{nlist}
\end{prop}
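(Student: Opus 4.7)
The strategy is to factor $\Phi$ as the indicator of a clopen set times a function of the deficiency $\delta_{[x,z]}(a)$, and then to analyse both pieces separately. The only genuinely case-sensitive part is the analysis of how $\delta_{[x,z]}(a)$ depends on $z$.

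First I would show that $U = \{\, z\in\overline X : a\in [x,z]\,\}$ is clopen. Since $a$ and $x$ are both original vertices, only finitely many hyperplanes separate them; call this finite set $\mathcal S$. The condition $a\notin [x,z]$ says there is some $H$ with $x(H)=z(H)\neq a(H)$, which necessarily lies in $\mathcal S$, so $\overline X \setminus U$ is a finite union of sets of the form $\{\, z : z(H)=x(H)\,\}$ with $H\in\mathcal S$, each of which is a clopen coordinate cylinder.

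For Case (1), if $n<d(x,a)$ then property (2) of Theorem~\ref{oldthm} gives $\Phi\equiv 0$; if $n=d(x,a)$ then formula (\ref{weights}) on $U$ reduces to $\binom{\delta}{\delta}=1$ for every value of $\delta$, so $\Phi = \mathbf{1}_U$, which is continuous. For Case (2), on $U$ one has $\Phi(z) = \binom{n-d(x,a)+\delta_{[x,z]}(a)}{\delta_{[x,z]}(a)}$, a fixed polynomial in the integer $\delta_{[x,z]}(a) = N - k(z)$, where $k(z) = |\{\, H\in\mathcal A : z(H)\neq a(H) \,\}|$ and $\mathcal A$ denotes the set of hyperplanes adjacent to $a$. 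Writing $k(z) = \sum_{H\in\mathcal A} \mathbf 1[z(H)\neq a(H)]$ exhibits $k$ as a sum of continuous $\{0,1\}$-valued functions. If $a$ is finite then $\mathcal A$ is finite, the sum is continuous, and so $\Phi$ is continuous on $U$ and vanishes on its clopen complement, giving (2a). If $a$ is not finite, then $k$ is the pointwise supremum of its continuous partial sums, hence lower semicontinuous on $\overline X$, hence Borel; then $\Phi$ is Borel as well, giving (2b).

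The step requiring the most care is the last one, because \emph{a priori} the series defining $k(z)$ could diverge. However the standing hypothesis that $N$ bounds the dimension of the complex forces $k(z)\leq N$ for every $z\in\overline X$ -- this is precisely how the inequality $0\leq \delta_{[x,z]}(a)\leq N$ used in the very definition of the weight functions is derived in \cite{BCGNW}. Consequently $k$ takes values only in $\{0,1,\dots,N\}$, each level set $\{\, z : k(z)=m \,\}$ is a difference of open sets in $\overline X$, and the Borel measurability of $\Phi$ reduces to the routine observation that a polynomial in a Borel integer-valued function is Borel. The conceptual point is that in case (2b) non-local-finiteness of $X$ around $a$ prevents $\Phi$ from factoring through only finitely many coordinates of $z$, so one has to settle for Borel rather than continuous.
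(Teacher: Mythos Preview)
Your proof is correct and follows the same outline as the paper's: both first isolate the clopen set $U=\{z:a\in[x,z]\}$ (this is exactly Lemma~\ref{zeropreimage}), dispose of case~(1) trivially, and then reduce case~(2) to analysing how the count $k(z)$ of hyperplanes adjacent to $a$ separating $a$ from $z$ depends on $z$. The only cosmetic difference is in that last step: where you invoke lower semicontinuity of $k=\sup$ of its continuous partial sums, the paper writes out the superlevel sets of $\Phi$ explicitly as the intersection \eqref{superlevelset} over $k$-element subsets of the $H_j$, observing it is closed (and clopen when $a$ is finite); these are two phrasings of the same fact.
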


Before turning to the proof of the proposition, we require a lemma.

\begin{lem}
\label{zeropreimage}
 For any choice of original vertices $x$ and $a$ the set 
 $\{\, z : a\in [x,z] \,\}$ is clopen in $\overline X$.
\end{lem}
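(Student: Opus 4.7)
The plan is to express the set $\{\,z : a \in [x,z]\,\}$ as a \emph{finite} intersection of half spaces, all of which are clopen in $\overline X$ by the remarks opening this part of Section~\ref{cubical}.

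First I would unwind the definition of the interval. By definition, $a \in [x,z]$ means $a$ lies in every half space containing both $x$ and $z$. Equivalently, no hyperplane separates $a$ from both $x$ and $z$ simultaneously. In the language of the Hamming cube coordinates, this translates to the following condition: for every hyperplane $H$ with $a(H) \neq x(H)$, we must have $a(H) = z(H)$ (otherwise $H$ would separate $a$ from both $x$ and $z$). Hyperplanes $H$ for which $a(H) = x(H)$ impose no condition on $z$, since such an $H$ does not separate $a$ from $x$.

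Next I would use the fact that $x$ and $a$ are both \emph{original} vertices, so that they are separated by only finitely many hyperplanes, say $H_1,\dots,H_k$. Combining with the observation above gives the identification
\begin{equation*}
  \{\, z \in \overline X : a \in [x,z] \,\}
   = \bigcap_{i=1}^{k} H_i^{a(H_i)}.
\end{equation*}
The right-hand side is a finite intersection of half spaces, each of which is clopen, and the conclusion follows. Note the case $k=0$ (when $x=a$) is degenerate but harmless: the interval $[x,z]$ always contains $x$, so the set in question is all of $\overline X$.

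I do not anticipate a real obstacle here; the one mild subtlety is confirming that the condition $a(H)=z(H)$ on the finitely many separating hyperplanes really does force $a \in [x,z]$ when $z$ is an \emph{ideal} vertex, but this is immediate from the coordinate-based extension of intervals discussed just before Lemma~\ref{intersects}. The entire argument reduces the geometry to a finite-combinatorial statement and then cites the basic topology of $\overline X$.
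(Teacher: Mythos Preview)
Your proof is correct and essentially the same as the paper's: both identify the finitely many hyperplanes $H_1,\dots,H_k$ separating $x$ from $a$ and use them to describe the set in question as a finite Boolean combination of half spaces. The only cosmetic difference is that the paper writes the complement as the finite union $\bigcup H_i^{x(H_i)}$ while you write the set itself as the finite intersection $\bigcap H_i^{a(H_i)}$; these are de Morgan duals of one another.
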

\begin{proof}
 The complement of the set in question is
 \begin{equation*}
   \{\, z : \text{$\exists$ $H\in\h$ such that $H$ 
              separates $a$ from both $x$ and $z$} \,\} =
        \cup H^{x(H)},
 \end{equation*}
where the union is over the finite set of hyperplanes
separating $a$ from $x$.  (When $a=x$ this set is empty.)
This set is clopen, hence so is its complement.
\end{proof}

\begin{proof}[Proof of Proposition~\ref{borel}]
  We divide (1) into two cases.  First, if $n<d(x,a)$ then $\Phi$ is
  identically zero.  Second, if $n=d(x,a)$ then
  $\Phi$ is given by the formula
  \begin{equation*}
    \Phi(z) = \begin{cases}
          1, & a\in [x,z] \\ 0, &\text{else}.
              \end{cases}
  \end{equation*}
In other words, $\Phi$ is the characteristic function of the clopen
set appearing in the previous lemma, and so it is continuous.

We consider (2a) and (2b) simultaneously, and proceed by analysing the
level sets of $\Phi$.

Write $A=n-d(x,a)>0$.
Inspecting (\ref{weights}) we see that $\Phi$ is given by the formula
\begin{equation*}
  \Phi(z) = \begin{cases}
         {A+(N-k)\choose (N-k)}, &a\in [x,z] \\
           0, &a\notin [x,z], \end{cases}
\end{equation*}
where $k=k(z)$ appears in the formula (\ref{def}) for the deficiency.
Thus, the values of $\Phi$ are among the (distinct) natural numbers
\begin{equation*}
  0 \quad\text{and} \quad {A+(N-k)\choose (N-k)}, 
         \quad 0\leq k \leq \dim(X)\leq N.
\end{equation*}
Further, the level sets corresponding to these values are
$\Phi^{-1}(0) = \{\, z : a\notin [x,z] \,\}$ and
\begin{equation}
\label{levelset}
 \Phi^{-1}\left({A+(N-k)\choose (N-k)}\right) =   
      \{\, z : \text{$a\in [x,z]$ and $\delta_{[x,z]}(a)=N-k$} \,\},
\end{equation}
respectively.  The first of these is clopen, by the lemma.  We analyze
the second (\ref{levelset}).  

Let $K_1,\dots,K_n$ be the (finitely many) hyperplanes separating $x$
and $a$.  Let $H_1,H_2,\dots$ be the hyperplanes adjacent to $a$ and
{\it not\/} separating $x$ and $a$.  Observe the the collection of
$H$'s is finite exactly when $a$ is a finite vertex.  The conditions
defining the level set (\ref{levelset}) are that $x$ and $z$ are
separated by every $K_i$ and {\it exactly\/} $k$ of the $H_j$.
Similarly,
\begin{equation}
\label{superlevel}
  \Phi(z) > {A+(N-k)\choose (N-k)}
\end{equation}
precisely when $x$ and $z$ are separated by every $K_i$ and 
{\it fewer than\/} $k$ of the $H_j$.  Thus, the set of admissible $z$
satisfying (\ref{superlevel}) is precisely
\begin{equation}
\label{superlevelset}
  K_1^{a(K_1)}\cap\dots\cap K_n^{a(K_n)} \cap
      \bigcap \left( H_{j_1}^{x(H_{j_1})} \cup\dots\cup  H_{j_k}^{x(H_{j_k})}\right), 
\end{equation}
with the large intersection being over the $k$ element subsets
$j_1,\dots,j_k$ of $j$'s.  The set appearing in (\ref{superlevelset})
is closed so that, as the difference of two closed sets, the level set
(\ref{levelset}) is Borel, as is $\Phi$.
Further, if $a$ is finite, the set appearing in (\ref{superlevelset})
is clopen -- the intersection is finite because there are only
finitely many $k$ element subsets of $j$'s.  In this case, as the
difference of clopen sets, the level
set (\ref{levelset}) is clopen and
$\Phi$ is continuous.
\end{proof}

\begin{rem}
  In the course of the proof we have established the following fact:
  for all choices of the parameters $n$, $x$ and $a$, if $\Phi(z)=0$
  then $\Phi$ is continuous at $z$.
\end{rem}

\begin{rem}
The proposition leaves open the question of whether $\Phi$ is
continuous when $a$ is an infinite point.  Indeed, it is not difficult
to see that if $a$ is infinite then $\Phi$ is {\it not\/} continuous.

  In the notation of the proposition, suppose that $a$ is an infinite
  point (and also that $n> d(x,a)$).  We show that $\Phi$ is {\it not\/} continuous
  at $a$.  Indeed, let $H_1,H_2,\dots$ be an infinite sequence of
  hyperplanes adjacent to $a$, none of which separate $a$ from $x$.
  Let $z_j$ be the vertex immediately across $H_j$ from $a$, and note
  that $a\in[x,z_j]$.
  Inspecting the definition (\ref{weights}) we see that 
  \begin{equation*}
    \Phi(a) = {n-d(x,a)+(N-0)\choose (N-0)} \neq 
       {n-d(x,a) +(N-1)\choose N-1} = \Phi(z_j).
  \end{equation*}
The value $\Phi(z_j)$ is independent of $j$, different from $\Phi(a)$
and $z_j\to a$.
\end{rem}

While this remark is quite simple, it leads to a complete analysis of
the continuity of the $\Phi$, which we develop in the two subsequent
propositions.  Note that when $z=a$ the first of these is
essentially the previous remark.

\begin{prop}
\label{middle}
  Continue in the notation of Proposition~\ref{borel}, and assume
  $n>d(x,a)$.  Let $z$ be an original vertex for which 
  $\Phi(z)\neq 0$.  The function $\Phi$ is continuous at $z$ exactly
  when only finitely many hyperplanes are adjacent to both $a$ and
  $z$. 
\end{prop}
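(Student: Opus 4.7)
Let $S(z) = \{H \in \mathrm{Adj}(a) : a(H) \neq z(H)\}$ be the set of hyperplanes contributing to the deficiency $\delta_{[x,z]}(a) = N - k_0$, with $|S(z)| = k_0$, and let $K_1, \dots, K_q$ be the finitely many hyperplanes separating $x$ from $a$. The set $S(z)$ is automatically finite with $k_0 \leq \dim X \leq N$, since its elements pairwise cross at $a$.

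For the direction ``$\mathrm{Adj}(a) \cap \mathrm{Adj}(z)$ infinite $\Rightarrow$ $\Phi$ discontinuous at $z$'', choose a sequence of distinct hyperplanes $L_1, L_2, \dots$ in $\mathrm{Adj}(a) \cap \mathrm{Adj}(z)$ outside the finite set $S(z) \cup \{K_1, \dots, K_q\}$. Since each $L_j$ is adjacent to $z$, there is a unique original vertex $w_j$ differing from $z$ only on the coordinate $L_j$, and distinctness of the $L_j$ gives $w_j \to z$ in the product topology. A direct computation using $a \in [x, z]$ then gives $a \in [x, w_j]$ and identifies the hyperplanes in $\mathrm{Adj}(a)$ separating $a$ from $w_j$ as exactly $S(z) \cup \{L_j\}$, so $\delta_{[x, w_j]}(a) = N - k_0 - 1$. (Incidentally, the mere existence of such an $L_j$ forces $k_0 + 1 \leq \dim X$ via the $(k_0+1)$-cube spanned at $a$ by $S(z) \cup \{L_j\}$, so the binomial below is well-defined.) Hence $\Phi(w_j) = \binom{n - d(x,a) + N - k_0 - 1}{N - k_0 - 1}$, independent of $j$ and distinct from $\Phi(z)$; continuity at $z$ fails.

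For the converse, assume $\mathrm{Adj}(a) \cap \mathrm{Adj}(z)$ is finite, and consider the basic open neighborhood
\begin{equation*}
  U = \bigcap_{E \in (\mathrm{Adj}(a) \cap \mathrm{Adj}(z)) \cup S(z) \cup \{K_1, \dots, K_q\}} E^{z(E)}
\end{equation*}
of $z$. I would show $\Phi \equiv \Phi(z)$ on $U$. The inclusion of the $K_j$ forces $a \in [x, w]$ for every $w \in U$, and the inclusions of $\mathrm{Adj}(a) \cap \mathrm{Adj}(z)$ and $S(z)$ ensure that each hyperplane of those two sets contributes to $\delta_{[x, w]}(a)$ exactly as it does to $\delta_{[x,z]}(a)$. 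The delicate step, and the principal obstacle, is to show that no further hyperplane of $\mathrm{Adj}(a)$ separates $a$ from $w$: namely, that for every $H \in \mathrm{Adj}(a) \setminus \mathrm{Adj}(z)$ with $a(H) = z(H)$ and every admissible $w \in U$, one has $w(H) = z(H)$.

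The strategy for this step combines admissibility (Lemma~\ref{intersects}) with the standard CAT(0) cube complex fact that two hyperplanes adjacent to a common vertex either cross at that vertex or have disjoint non-adjacent half-spaces. If $H$ fails to cross some $H_i \in S(z)$ at $a$, the disjointness of $H^{-a(H)}$ and $H_i^{-a(H_i)}$ together with the constraint $w(H_i) = z(H_i) = -a(H_i)$ immediately forces $w(H) = a(H) = z(H)$. Otherwise $H$ crosses every element of $S(z)$; producing via Lemma~\ref{intersects} an original vertex $v$ with $v(H) \neq z(H)$ agreeing with $z$ on the constraints of $U$, the first-step hyperplanes $S(z) \cup \{H\}$ of the geodesic $a \to v$ span a $(k_0+1)$-cube at $a$ whose corner opposite $a$ is adjacent, across $H$, to the corner $a^* \in [a, z]$ of the $S(z)$-subcube. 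If $a^* = z$ we conclude $H \in \mathrm{Adj}(z)$, a contradiction. The genuinely difficult subcase is $a^* \neq z$: one iterates with $(a^*, z)$ in place of $(a, z)$, using $d(a^*, z) < d(a,z)$ strictly, and concludes by induction on $d(a,z)$. The main technical hurdle is verifying that the analogous finiteness $|\mathrm{Adj}(a^*) \cap \mathrm{Adj}(z)| < \infty$ persists under the iteration; I expect this to follow from the original hypothesis together with the cube combinatorics.
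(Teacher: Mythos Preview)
Your forward implication is correct and is essentially the paper's argument.

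The reverse implication has a genuine gap: the neighborhood $U$ you define is too large, and $\Phi$ need not be constant on it.  Concretely, take the complex obtained from the ray $\{(p,0) : p \ge 0\}$ by attaching a single square $[0,1]^2$; set $x = a = (0,0)$, $z = (5,0)$, and any $n \ge 1$.  Here $\mathrm{Adj}(a) \cap \mathrm{Adj}(z) = \emptyset$, $S(z)$ consists of the single hyperplane $K_0 = \{x=\tfrac12\}$, and there are no hyperplanes separating $x$ from $a$; so your $U$ is simply $K_0^{z(K_0)}$.  The vertex $w = (1,1)$ lies in $U$, yet the hyperplanes in $\mathrm{Adj}(a)$ separating $a$ from $w$ are $\{K_0, H\}$ (with $H = \{y=\tfrac12\}$), giving deficiency $N-2$, whereas for $z$ the deficiency is $N-1$; hence $\Phi(w) \neq \Phi(z)$.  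Tracing your induction in this example shows exactly where it breaks: at $a^* = (1,0)$ the new set $S'(z)$ contains $\{x=\tfrac32\}$, which is not among the constraints defining $U$, so your Case~1 reasoning at $a^*$ gives no information about $w$.  The ``technical hurdle'' you flag is thus not the real obstacle; rather, the constraints of $U$ simply do not propagate through the iteration.

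The paper repairs this by a different choice of neighborhood: replace your $\{K_1,\dots,K_q\}$ (hyperplanes separating $x$ from $a$) by the hyperplanes separating $x$ from $z$, which is a superset since $a\in[x,z]$.  The point of this choice is that it forces $z\in[a,w]$ for every $w$ in the neighborhood.  With that in hand, a short median computation (Lemma~\ref{slice}) shows that any hyperplane adjacent to $a$ which separates $a$ from $w$ but not from $z$ must also be adjacent to $z$; hence it lies in $\mathrm{Adj}(a)\cap\mathrm{Adj}(z)$ and is already constrained, a contradiction.  No induction on $d(a,z)$ is needed.
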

\begin{proof}
  The forward implication proceeds exactly as the remark.  Indeed,
  with $z$ as in the statement, let $H_1,H_2,\dots$ be an infinite
  sequence of hyperplanes adjacent to both $a$ and $z$, none of which
  separate $a$ from $z$, and none of which separate $a$ from $x$.  The
  vertices $z_j$ immediately across $H_j$ from $z$ witness the
  non-continuity of $\Phi$ at $z$.

For the reverse implication, let $H_1,\dots,H_n$ be the hyperplanes
adjacent to both $a$ and $z$, and let $K_1,\dots,K_m$ be the
hyperplanes that separate $x$ and $z$.  The intersection
\begin{equation*}
  H_1^{z(H_1)} \cap\dots\cap H_n^{z(H_n)} \cap
    K_1^{z(K_1)} \cap\dots\cap K_m^{z(K_m)}
\end{equation*}
is a clopen neighborhood of $z$.  Let $w$ belong to this neighborhood.
We claim that $\Phi(w)=\Phi(z)$.  Now, since $w\in K_i^{z(K_i)}$ for
all $i$ we have $a\in [x,w]$.  Thus, the values $\Phi(w)$ and
$\Phi(z)$ are given by the first case in (\ref{weights}) and we must
show
\begin{equation*}
  \delta_{[x,z]}(a) = \delta_{[x,w]}(a).
\end{equation*}
We introduce the notation $\mathfrak N_z(a)$ for the {\it deficiency
  set of $a$ with respect to $z$}, that is, the set of hyperplanes
that are adjacent to $a$ and that separate $a$ from $z$.  The
deficiency $\delta_{[x,z]}(a)$ is the difference of $N$ and the
cardinality of $\mathfrak N_z(a)$.  Thus, it suffices to
show that $\mathfrak N_z(a) = \mathfrak N_w(a)$.

Because $a\in [x,z]$ (by hypothesis), a hyperplane separating $a$ and
$z$ is one of the $K_i$, which therefore also separates $a$ from $w$.
It follows that $\mathfrak N_z(a)\subset \mathfrak N_w(a)$.  For the
reverse inclusion, suppose $H\in \mathfrak N_w(a)$.  We must show that
$H$ separates $a$ from $z$.  If not, then the subsequent lemma shows
that $H$ is adjacent to $z$ -- indeed, $z\in[a,w]$ since any hyperplane
separating $a$ from $z$ also separates $x$ from $z$, thus is among the
$K_i$.  Thus, $H$ is one of the $H_j$, so that $z$ and $w$ are on the
same side of $H$, the side opposite $a$.  This is a contradiction.
\end{proof}

\begin{lem}
\label{slice}
  Suppose that $H$ is adjacent to $a$, that $H$ separates $z$ and $w$,
  and that $z\in[a,w]$.  Then $z$ is adjacent to $H$.
\end{lem}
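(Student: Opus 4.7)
The plan is to construct an admissible vertex $z'$ that differs from $z$ only across $H$, using the median operation that was just developed for admissible vertices. Since $H$ is adjacent to $a$, pick the admissible vertex $a'$ that differs from $a$ only on $H$. I would then consider the candidate $z' = m(a',z,w)$.

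Before carrying out the median computation, I would make one preliminary observation: $H$ does not separate $a$ from $z$. Indeed, if $H$ did separate $a$ from $z$, then since $z \in [a,w]$ we would need $H$ to not separate $z$ from $w$, contradicting the hypothesis. Hence $a(H)=z(H)$, and consequently $a'(H) = -z(H) = w(H)$.

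Next I would check, hyperplane by hyperplane, that $z'$ agrees with $z$ off $H$ and disagrees on $H$. On $H$ itself the three values are $a'(H), z(H), w(H) = -z(H), z(H), -z(H)$, whose majority is $-z(H)$, so $z'(H) = -z(H)$. For any other hyperplane $K$, we have $a'(K) = a(K)$; the hypothesis $z \in [a,w]$ forbids $K$ from separating $z$ simultaneously from $a$ and $w$, so $z(K)$ agrees with at least one of $a(K)$ and $w(K)$, making $z(K)$ a majority value, i.e.\ $z'(K) = z(K)$. Together these show $z'$ is adjacent to $z$ across $H$.

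Finally, $z'$ is admissible by the median proposition (applied to the three admissible vertices $a',z,w$), so the existence of this $z'$ is exactly the definition of $z$ being adjacent to $H$. I do not anticipate a real obstacle: the only thing that could go wrong is the preliminary separation argument in the second paragraph, and that is immediate from $z \in [a,w]$ combined with $H$ separating $z$ and $w$. The rest is a routine majority-vote check.
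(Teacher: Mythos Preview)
Your proof is correct and is essentially the same as the paper's: you both take the vertex $a'$ (the paper calls it $b$) across $H$ from $a$, form the median $m(a',z,w)$, and verify hyperplane by hyperplane that this median differs from $z$ exactly on $H$. Your preliminary separation argument and majority-vote check match the paper's reasoning line for line.
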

\begin{proof}
  Observe that $H$ separates $a$ from $w$, and hence not from $z$;
  indeed, otherwise $H$ separates both $a$ and $w$ from $z$
  contradicing $z\in [a,w]$.  
  Let $b$ be the vertex immediately across $H$ from $a$.  Let $m$ be
  the median of $b$, $z$ and $w$.  We claim that $H$ is the unique
  hyperplane separating $z$ from $m$ so that, in particular, $z$ is
  adjacent to $H$.  Indeed,
  \begin{equation*}
    w(H) = b(H) \neq a(H) = z(H)
  \end{equation*}
shows that $m(H) \neq z(H)$, that is, $H$ separates $m$ and $z$.
Further, if $K$ is such that $z(K)\neq m(K)$ then 
\begin{equation*}
  b(K)=w(K)=m(K) \neq z(K) = a(K),
\end{equation*}
where the last equality holds since $a\in[a,w]$.  Thus, $K$ separates
$a$ from $b$, and $K=H$.
\end{proof}

Continuity of $\Phi$ at ideal vertices is slightly more subtle, and is
treated in the next proposition.  Observe that when $z$ is an original
vertex, the stated condition reduces to the one in the previous
proposition -- indeed, when $z$ is an original vertex elements of the
interval $[a,z]$ can only be separated from $z$ by those (finitely
many) hyperplanes that separate $a$ from $z$; thus, any sequence of
such points converging to $z$ is eventually constant.

\begin{prop}
\label{notcont}
  Continue in the notation of Proposition~\ref{borel}, and assume
  $n>d(x,a)$.  Let $z$ be an admissible vertex for which $\Phi(z)\neq
  0$.  The function $\Phi$ is not continuous at $z$ precisely when
  there is a sequence $m_1,m_2,\dots$ of
  admissable vertices in the interval 
  $[a,z]$ converging to $z$ and a sequence $H_1,H_2,\dots$ of distinct
  hyperplanes adjacent to $a$ for which
  $H_j$ is adjacent to $m_j$.
\end{prop}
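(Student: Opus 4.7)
The strategy is to exploit the fact (visible from the proof of Proposition \ref{borel}) that $\Phi$ takes only finitely many values and that, when $a\in [x,w]$, the value $\Phi(w)=\binom{n-d(x,a)+(N-k)}{N-k}$ depends only on $k=|\mathfrak N_w(a)|$ and is strictly monotone in $k$ (using $n>d(x,a)$). Both directions then reduce to translating between discontinuity sequences for $\Phi$ at $z$ and the combinatorial data $(m_j,H_j)$, via the equivalence $\Phi(w)\neq\Phi(z)\iff |\mathfrak N_w(a)|\neq |\mathfrak N_z(a)|$.

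For the reverse implication, set $w_j$ to be the admissible vertex adjacent to $m_j$ across $H_j$, available by hypothesis. Since the $H_j$ are distinct and $m_j\to z$, any fixed hyperplane can distinguish $w_j$ from $z$ for at most finitely many $j$, so $w_j\to z$. For large $j$ one has $a\in [x,w_j]$ (the finitely many hyperplanes separating $x$ from $a$ are eventually tracked by $w_j$, using $a\in[x,z]$) and $\mathfrak N_{m_j}(a)=\mathfrak N_z(a)$ (since $\mathfrak N_z(a)$ has cardinality at most $N$ and $m_j\to z$ with $m_j\in [a,z]$). Flipping at the adjacent-to-$a$ hyperplane $H_j$ then changes $|\mathfrak N_{m_j}(a)|$ by exactly $\pm 1$, so $|\mathfrak N_{w_j}(a)|\neq k$ and $\Phi(w_j)\neq \Phi(z)$.

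For the forward implication, start with $y_j\to z$ on which $\Phi(y_j)$ fails to converge to $\Phi(z)$ and pass to a subsequence making $\Phi(y_j)$ constant. The constant cannot be zero: after extraction some fixed hyperplane $K$ would have to separate $a$ from both $x$ and every $y_j$, but $y_j(K)\to z(K)$ forces $K$ to also separate $a$ from $z$, contradicting $a\in[x,z]$. Hence $|\mathfrak N_{y_j}(a)|=k'\neq k$ for all $j$. Each individual hyperplane lies in $\mathfrak N_{y_j}(a)\triangle\mathfrak N_z(a)$ for only finitely many $j$ (by $y_j\to z$), while the symmetric differences have size at least $|k'-k|\geq 1$; a greedy extraction therefore produces distinct $L_j\in \mathfrak N_{y_j}(a)\triangle\mathfrak N_z(a)$ along a subsequence, each adjacent to $a$ by construction. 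Put $m_j=m(a,z,y_j)$; this lies in $[a,z]$ automatically and converges to $z$ via the majority rule for the median.

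The heart of the argument is verifying that $L_j$ is adjacent to $m_j$. I would exhibit an explicit admissible neighbor across $L_j$: let $a'_j$ be the original vertex obtained from $a$ by crossing $L_j$, and put $m'_j=m(a'_j,z,y_j)$, which is admissible as a median of admissible vertices by the preceding proposition. A short case analysis on whether $L_j\in \mathfrak N_{y_j}(a)\setminus\mathfrak N_z(a)$ or $\mathfrak N_z(a)\setminus\mathfrak N_{y_j}(a)$ shows that $m'_j$ differs from $m_j$ precisely on $L_j$. This median-flip trick, coupled with the greedy selection of distinct $L_j$, is the main obstacle; together they convert the bare non-continuity of $\Phi$ at $z$ into the structured combinatorial data demanded by the proposition.
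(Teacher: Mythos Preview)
Your proof is correct and follows the same overall architecture as the paper: in both directions you take a (dis)continuity sequence, pass to medians $m_j=m(a,z,y_j)\in[a,z]$, and analyse the deficiency sets $\mathfrak N_{(\cdot)}(a)$ via the monotonicity of $\binom{A+(N-k)}{N-k}$ in $k$.  The reverse implication is essentially identical to the paper's.

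The one genuine difference is in the forward implication, at the step you flag as the heart.  The paper first observes that the $\mathfrak N_{m_j}(a)$ stabilise at $\mathfrak N_z(a)$, then picks $H_j$ separating $m_j$ from $y_j$ and invokes Lemma~\ref{slice} (which needs $m_j\in[a,y_j]$) to conclude that $H_j$ is adjacent to $m_j$.  You instead pick $L_j$ directly from $\mathfrak N_{y_j}(a)\triangle\mathfrak N_z(a)$ and exhibit an explicit admissible neighbour $m'_j=m(a'_j,z,y_j)$ across $L_j$, bypassing Lemma~\ref{slice} entirely.  This ``median-flip'' is a clean self-contained device: since $a$ and $a'_j$ differ only on $L_j$, the two medians can differ only there, and the case analysis on which side of the symmetric difference $L_j$ lies in shows they do differ.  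The paper's route isolates the adjacency mechanism into a reusable lemma; yours trades that modularity for a slightly more direct construction tailored to the situation at hand.  Your greedy extraction of distinct $L_j$ (using that each fixed hyperplane lies in the symmetric difference only finitely often because $y_j\to z$) is a minor reorganisation of the paper's a posteriori distinctness argument, and is equally valid.
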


\begin{proof}
  We provide Figure~\ref{fig:example} to aid the reader in following
  the proof.  
  
  Suppose first that $\Phi$ is not continuous at $z$, and
  that $\Phi(z)\neq 0$.  We claim that there exists a sequence of
  admissible vertices $z_j\to z$ such that every $z_j$ satisfies the
  following:
  \begin{nlist}
    \item $a\in [x,z_j]$
    \item $\delta_{[x,z_j]}(a) \neq \delta_{[x,z]}(a)$.
  \end{nlist}
  Indeed, begin with a sequence $z_j\to z$ for which $\Phi(z_j)$ does
  not converge to $\Phi(z)$.  Now, every sequence of admissible
  vertices converging to $z$ must satisfy (1) on a tail --
  $\Phi(z)\neq 0$ implies that $z$ belongs to the clopen set described
  in Lemma~\ref{zeropreimage}.  Thus, we may assume our sequence
  satisfies (1), so that the values $\Phi(z_j)$ are given by the first
  case in (\ref{weights}).  Thus, $\delta_{[x,z_j]}(a)$ does not
  converge to $\delta_{[x,z]}(a)$ and, we arrange for (2) by passing
  to a subsequence.\footnote{As the deficiency can assume only
    finitely many values, we could also arrange that the
    $\delta_{[x,z_j]}(a)$ is constant (independent of $j$) and
    different from $\delta_{[x,z]}(a)$.}

  Consider now the median 
  \begin{equation}
  \label{mj}
    m_j=m(a,z_j,z),
  \end{equation}
  which by construction lies in the interval $[a,z]$.  As shown in
  \cite{BCGNW} the sequence $m_j$ converges to $z$. The $m_j$ (rather,
  a subsequence) will be the sequence we seek -- it remains to locate
  the required adjacent hyperplanes.  To do this, we claim that for
  sufficiently large $j$, we have 
  $\mathfrak N_{m_j}(a)\neq\mathfrak N_{z_j}(a)$ -- here, we employ
  the notation regarding deficiency sets introduced in the proof of
  Proposition~\ref{middle}. Again as shown in \cite{BCGNW}, since the
  $m_j$ converge to $z$ and lie in the interval $[a,z]$, the
  subsets $\mathfrak N_{m_j}(a)$ eventually stabilise at 
  $\mathfrak N_z(a)$.  Thus, combined with (2) we see that
  for sufficiently large $j$
  \begin{equation*}
         \delta_{[x,m_j]}(a)=\delta_{[x,z]}(a)\neq \delta_{[x,z_j]}(a),
  \end{equation*}
from which the claim follows.
Thus, for each sufficiently large $j$ there is a
hyperplane $H_j$ adjacent to $a$ that separates $m_j$ and $z_j$. It
follows from Lemma~\ref{slice} that $H_j$ is adjacent to $m_j$ -- by
(\ref{mj}) we have $m_j\in [a,z_j]$ so that the lemma applies.

It remains only to see that the sequence $H_j$ contains infinitely
many distinct hyperplanes.  Indeed, we shall show slightly more --
that every hyperplane $H$ can appear as an $H_j$ only finitely many
times.  Assume to the contrary, that the hyperplane $H$ appears
infinitely many times.  Then, since both $z_j$ and $m_j$ converge
to $z$, they are eventually on a common side of $H$, which contradicts
the fact that $H$ separates $m_j$ and $z_j$.

Suppose now that $\Phi(z)\neq 0$ and that the conditions in the
statement are satisfied.  We are to show that $\Phi$ is not continuous
at $z$.  As remarked above, since the $m_j$ converge to $z$ and all
belong to the interval $[a,z]$, the deficiency sets $\mathfrak
N_{m_j}(a)$ eventually stabilise at $\mathfrak N_z(a)$ \cite{BCGNW};
without loss of generality we may assume that they all coincide.  Let
$m_j'$ denote the vertex immediately across $H_j$ from $m_j$.  We
claim that $m'_j$ converges to $z$.  To see this, let $K$ be an
arbitrary hyperplane.  If $K$ is not one of the $H_j$ then
$m'_j$ and $m_j$ agree on $K$ for every $j$; if $K$ is
one of the $H_j$ then $m'_j$ and $m_j$ agree on $K$ for
sufficiently large $j$.  Either way, $m'_j$ and $z$ will agree on $K$
for sufficiently large $j$ as this is the case for $m_j$.

It remains to show that $\Phi(m'_j)$ does not converge to $\Phi(z)$.
Comparing to the beginning of the proof, the value $\Phi(m'_j)$ is
given by the first case in (\ref{weights}).  Thus, we must show that 
deficiencies $\delta_{[x,m'_j]}(a)$ do not converge to
$\delta_{[x,z]}(a)$.  To see this we note that for each $j$, the
deficiency sets $\mathfrak N_{m_j}(a)$ and $\mathfrak N_{m_j'}(a)$
differ in at exactly one place, either including or deleting $H_j$
from the set. It follows that 
$\delta_{[x,{m_j'}]}(a)=\delta_{[x,m_j]}(a)\pm 1
=\delta_{[x,z]}(a)\pm 1\not=\delta_{[x,z]}(a)$ and the proof is
complete. 
\end{proof}

\begin{figure}[h] 
   \centering
   \includegraphics[width=5in]{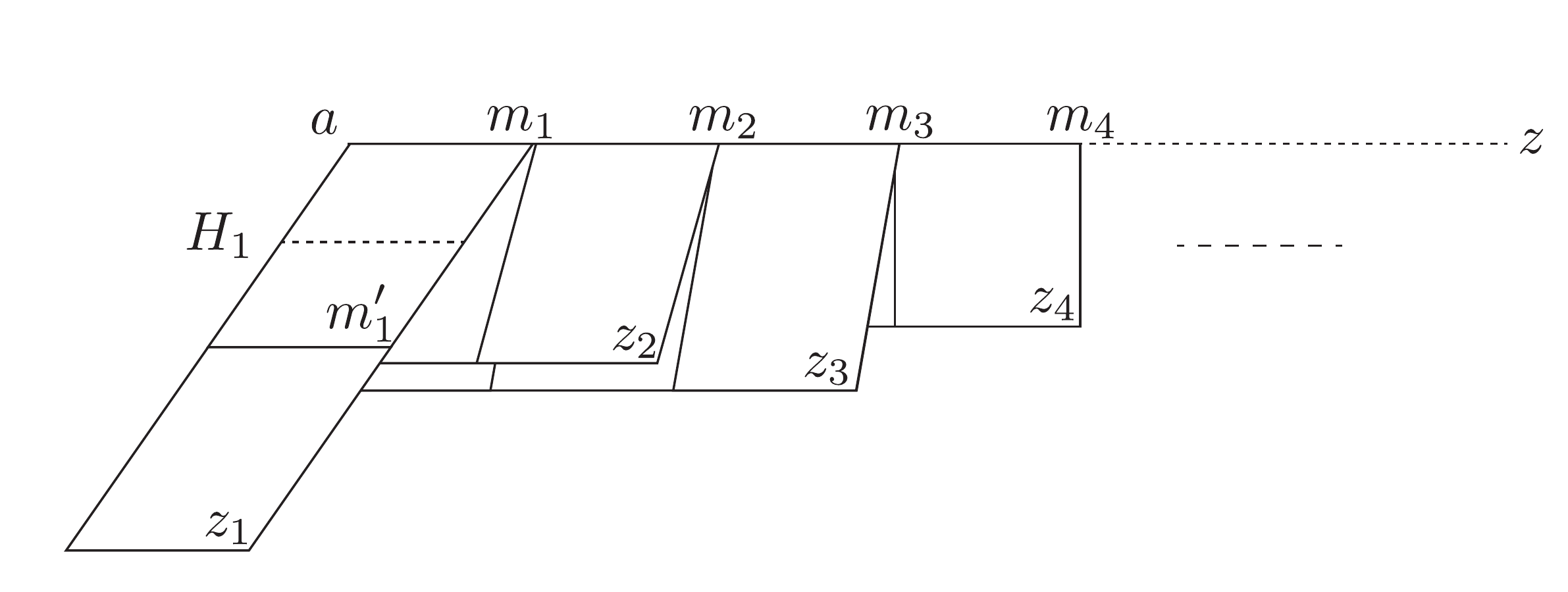} 
   \caption{A point $z$ at which the weight function $\Phi$ is not continuous.}
   \label{fig:example}
\end{figure}

\begin{rem}
  Let $X$ be a (simplicial) tree.  Taken together, the previous
  propositions show that for fixed vertices $x$ and $a\in X$, the function
  $\Phi(z)$ is continuous on all of $\overline X$, except possibly at $a$
  itself.  Further, it is continuous at $a$ exactly when $a$ is finite.
\end{rem}

In summary, when the cube complex is locally finite (that is, every
original vertex is finite) the weight functions are continuous; in
general, however, they are merely Borel.  In either case we shall need
to renormalise to produce probability measures indexed by $\overline X$
while in the latter case we shall also need to replace the Borel
weight functions by a {\it continuous\/} family of probability
measures.  Renormalisation is easy since the weight
functions are all non-negative and have $\ell^1$ norm equal to
${n+N\choose N}$ by Theorem~\ref{oldthm}.  Further, the normalised
weight functions ${n+N\choose N}^{-1} \phi^n_{x,z}$ share the same
continuity and Borel properties as the original $\phi^n_{x,z}$.
Obtaining a continuous family of weight functions is more difficult, but
understood.  The following result is based on the methods of \cite{Brown-Ozawa,ozawa}. 

\begin{lem}\label{Borel2continuous} Let $G$ be a group acting by cellular isometries on a finite dimensional $\CAT(0)$ cube complex $X$.
Given a finite subset $E\subset \Gamma$ and $\varepsilon>0$, there is a
finite subset $F\subset X$ and a function $\eta:\overline
X\rightarrow {\Prob(X)}$ such that  
\begin{nlist}
\item $\eta_z(a)$ is continuous in $z$, for each $a\in {X}$;
  \item $\supp \eta_z\subset F$, for every $z\in {\overline X}$;
  \item $\| s\cdot \eta_z - \eta_{sz} \| < \eps$, for every $s\in E$ and
              every $z\in\overline{X}$.
\end{nlist}
\end{lem}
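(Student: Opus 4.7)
The plan is to take the normalised weight functions $f^n_{x,z} := \binom{n+N}{N}^{-1} \phi^n_{x,z}$ of Theorem~\ref{oldthm} as a Borel, almost-equivariant family for large $n$, and then to replace them by a locally constant family via a finite clopen partition of $\overline X$. To arrange almost-equivariance, fix a base vertex $x$ and observe that properties (4) and (5) of Theorem~\ref{oldthm}, applied along an edge geodesic from $x$ to $sx$, give
\[
\|s \cdot f^n_{x,z} - f^n_{x,sz}\|
\;=\; \|f^n_{sx,sz} - f^n_{x,sz}\|
\;\leq\; \frac{2N\,d(x,sx)}{n+N}.
\]
Since $E$ is finite, one can choose $n$ so that this is at most $\varepsilon/2$ for every $s \in E$.

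Next, exploit the fact that $\overline X$ is compact and totally disconnected: the half-spaces form a basis of clopen sets. Pick a sufficiently large finite $\mathcal H_0 \subset \mathcal H$ and consider the clopen partition $\overline X = \bigsqcup_\sigma V_\sigma$, where $V_\sigma := \bigcap_{H \in \mathcal H_0} H^{\sigma(H)}$ is indexed by sign functions $\sigma : \mathcal H_0 \to \{\pm 1\}$ with $V_\sigma$ non-empty. Using density (Proposition~\ref{closure}), pick an original vertex $y_\sigma \in V_\sigma \cap X$ in each piece and set
\[
\eta_z := f^n_{x, y_{\sigma(z)}},\qquad \sigma(z) := z|_{\mathcal H_0}.
\]
Properties (1) and (2) then follow: $\eta_z(a)$ is locally constant in $z$ (hence continuous), and $F := \bigcup_\sigma \supp f^n_{x, y_\sigma}$ is a finite union of finite sets, since each $y_\sigma$ is an original vertex and so $[x, y_\sigma]$ is finite.

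For (3), write $\sigma' = (sz)|_{\mathcal H_0}$ and use property (5) of Theorem~\ref{oldthm} to decompose
\[
\|s \cdot \eta_z - \eta_{sz}\|
\;\leq\; \|f^n_{sx, sy_\sigma} - f^n_{x, sy_\sigma}\| + \|f^n_{x, sy_\sigma} - f^n_{x, y_{\sigma'}}\|,
\]
the first summand being $<\varepsilon/2$ by the initial step. Controlling the second summand requires $\mathcal H_0$ to be chosen (a) so that $s^{-1} H \in \mathcal H_0$ for every $s \in E$ and every $H$ in some core finite subfamily $\mathcal H_0' \subset \mathcal H_0$, forcing $s y_\sigma$ and $y_{\sigma'}$ to agree on $\mathcal H_0'$; and (b) fine enough that two original vertices agreeing on $\mathcal H_0'$ have weight functions within $\varepsilon/2$ of each other in $\ell_1$-norm.

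The chief obstacle is (b). The map $z \mapsto f^n_{x,z}$ fails to be continuous in general (Proposition~\ref{notcont}), so in a non-locally-finite complex one cannot simply include every hyperplane adjacent to every vertex of $B_n(x)$ in a finite $\mathcal H_0'$. The saving grace is the dimension bound $0 \leq \delta_{[x,z]}(a) \leq N$ built into the definition~(\ref{def}) of deficiency: for every $a$, at most $N$ of its adjacent hyperplanes separate $a$ from any given $z$, so the \emph{a priori} infinite data needed to evaluate $f^n_{x,z}$ is, vertex by vertex, a combinatorial object of bounded size. Together with the binomial estimates of Theorem~\ref{oldthm}, this quantitative control should allow $\mathcal H_0$ to be chosen large enough to render the weight functions almost constant on each piece and so complete the proof.
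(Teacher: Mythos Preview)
Your step~(b) is left as a hope, and in fact it cannot be carried out. Take the simplest case: $x$ itself is an infinite vertex and $n\geq 1$ (which your first estimate forces whenever some $s\in E$ moves $x$). Given any finite $\mathcal H_0'$, choose a neighbour $z$ of $x$ across a hyperplane $H\notin\mathcal H_0'$. Then $x$ and $z$ are original vertices agreeing on $\mathcal H_0'$, yet $f^n_{x,x}=\delta_x$ while $f^n_{x,z}$ places mass $n/(n+N)$ at $z$, so that $\|f^n_{x,x}-f^n_{x,z}\| = 2n/(n+N)$. Hence two original vertices agreeing on $\mathcal H_0'$ can have weight functions nearly $2$ apart in $\ell_1$, no matter how large $\mathcal H_0'$ is taken; this is exactly the discontinuity exhibited in the remark following Proposition~\ref{borel}. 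The dimension bound you invoke constrains only the \emph{value} of the deficiency $\delta_{[x,z]}(a)$, not the family of hyperplanes on which that value depends, and so yields no uniformity in $z$. The same obstruction feeds through to the pairs $(sy_\sigma,y_{\sigma'})$ you actually need: on the infinite star with the shift action, for instance, one checks directly that no choice of representatives $y_\sigma$ makes your second summand small.

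The paper sidesteps the problem entirely. It uses only that the normalised weight functions are Borel (Proposition~\ref{borel}): Lusin's theorem produces continuous approximants, truncated to a common finite $F$, for which $0$ lies in the weak closure of the differences $s\cdot\eta_z-\eta_{sz}$; an application of the Hahn--Banach theorem then passes to convex combinations to obtain the norm estimate~(3). No explicit combinatorial control of the discontinuities is needed.
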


\begin{proof}[Sketch of proof]
  We sketch the argument of Ozawa, refering to
  \cite[Section~5.2]{Brown-Ozawa} for details.  

  When $X$ is locally finite there is nothing to prove.
  Fixing a basepoint $x\in X$ define $\eta$ using the normalized
  weight functions:  
  $\eta_z(a)= {n+N\choose N}^{-1}\phi^n_{z,x}(a)$. 

  When $X$ is not locally finite the normalised weight functions are
  neither continuous in $z$, nor do they satisfy the conclusion (2) on
  unifom supports.  They are, however, Borel and the proof proceeds by
  applying Lusin's theorem to approximate them by appropriate
  continuous functions $\eta_z$, taking care to ensure that we
  truncate to a common finite subset $F$ throughout. The approximation
  is carried out so that $0\in C(\overline X)$ is in the weak closure
  of the $s\cdot \eta_z - \eta_{sz}$.  Applying the Hahn-Banach
  theorem, after taking convex combinations we obtain (3).

\end{proof}

\begin{rem}
  In fact, we shall not require the full statement of the lemma.  We
  require only the existence, for every finite subset $E$ and
  $\varepsilon>0$, of a finite subset $F$ and functions
  $\eta:X\to\Prob(X)$ satisfying (2) and (3) where in (3) we consider
  only those $z\in X$.
\end{rem}

\section{Permanence}
\label{sec:permanence}

We shall adopt the following characterization of Property $A$ as our
definition.  A countable discrete group $\Gamma$ has 
{\it Property $A$\/} if for every finite subset $E\subset\Gamma$ and
every $\eps>0$ there exists a finite subset $F\subset \Gamma$ and a
function $\nu:\Gamma\to \Prob(\Gamma)$ such that
\begin{nlist}
 \item $\supp \nu_x\subset F$, for every $x\in \Gamma$,
 \item $\| s\cdot \nu_x - \nu_{sx} \| < \eps$, for every $s\in E$ and
             every $x\in\Gamma$.
\end{nlist}
Here, $\Prob(\Gamma)$ is the space of probability measures on $\Gamma$
and the norm is the $\ell_1$-norm.  We refer to
\cite[Lemma~3.5]{higson-roe} for the equivalence with the original
formulation of Property $A$ found in \cite{yu}.  For the present
purposes our definition has two advantages; first it makes no
reference to a particular compact space on which the group acts,  and
second the probability measure associated to a particular $x\in\Gamma$
is supported near the identity of $\Gamma$ and not near $x$ itself.

\begin{thm}\label{permanence}
  Let $\Gamma$ be a countable discrete group acting on a finite
  dimensional cubical complex $X$. Then $\Gamma$ has Property $A$ if
  and only if every vertex stabilizer of the action has Property $A$.
\end{thm}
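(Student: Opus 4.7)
The forward direction is standard: Property $A$ passes to subgroups, and every vertex stabilizer is a subgroup of $\Gamma$. For the reverse direction, assume each vertex stabilizer has Property $A$. Given a finite $E \subset \Gamma$ and $\varepsilon > 0$, my plan is to construct $\nu: \Gamma \to \Prob(\Gamma)$ satisfying the Property $A$ criterion by gluing the cubical approximation from Lemma~\ref{Borel2continuous} to Property $A$ approximations of the relevant stabilizers.

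The construction proceeds in three steps. First, I apply Lemma~\ref{Borel2continuous} (in the restricted-to-$X$ form indicated by the remark afterwards) with $E$ and tolerance $\varepsilon/3$, producing a finite $F_0 \subset X$ and $\eta: X \to \Prob(X)$ supported in $F_0$ with $\|s \cdot \eta_y - \eta_{sy}\| < \varepsilon/3$. Second, I fix a basepoint $x_\bullet \in X$ and, for each of the finitely many $\Gamma$-orbits $O$ meeting $F_0$, choose a basepoint $x_O \in O$ and coset representatives $t_b^O$ with $t_b^O x_O = b$ for $b \in O$. The ``twist'' elements $\sigma(s, b) := (t_b^O)^{-1} s\, t_{s^{-1}b}^O$ all lie in $\Gamma_{x_O}$ and form a finite set $E'_O$ as $s, b$ range over $E, F_0 \cap O$; Property $A$ of $\Gamma_{x_O}$ applied to $E'_O$ with tolerance $\varepsilon/3$ yields $\mu^O : \Gamma_{x_O} \to \Prob(\Gamma_{x_O})$ supported in a finite $F'_O \subset \Gamma_{x_O}$. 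Third, I combine these as
\[
\nu_g \;=\; \sum_{b \in F_0} \eta_{gx_\bullet}(b) \cdot (t_b^{O_b})_* \mu^{O_b}_{k(g,b)},\qquad k(g,b) \;:=\; (t_b^{O_b})^{-1} g\, t_{g^{-1}b}^{O_b} \;\in\; \Gamma_{x_{O_b}},
\]
where $O_b$ denotes the orbit of $b$. The product defining $k(g,b)$ fixes $x_{O_b}$, so $k(g,b)$ lies in the stabilizer as claimed, and $\supp \nu_g$ is contained in the $g$-independent finite set $\bigcup_{b \in F_0} t_b^{O_b} F'_{O_b}$.

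The approximate equivariance rests on the exact cocycle identity $k(sg, c) = \sigma(s, c) \cdot k(g, s^{-1}c)$, verified by direct cancellation of the coset-representative factors. Re-indexing $s \nu_g$ by $c = sb$ (which rewrites $s\, t_b^{O_b}$ as $t_c^{O_c}$ times a twist in $\Gamma_{x_{O_c}}$), I apply the stabilizer Property $A$ estimate to absorb the twist at cost $\varepsilon/3$, and then compare the resulting expression against $\nu_{sg}$ at cost $\varepsilon/3$ using the cubical bound on $\|s\eta_{gx_\bullet} - \eta_{sgx_\bullet}\|$, concluding $\|s \nu_g - \nu_{sg}\| < 2\varepsilon/3 < \varepsilon$. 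The step I expect to be the main obstacle is discovering the correct cocycle $k(g,b)$: choices depending only on $g$ or only on $b$ fail because the assignment $b \mapsto t_b^O$ cannot in general be made $\Gamma$-equivariant, and only the two-sided asymmetric form $(t_b^{O_b})^{-1} g\, t_{g^{-1}b}^{O_b}$ absorbs this coset-representative defect so that the cocycle identity holds exactly. Once the cocycle is in hand, the definition of $E'_O$ and the error bookkeeping become routine.
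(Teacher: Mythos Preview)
Your approach is essentially the paper's: combine the cubical approximant $\eta$ with the stabilizer Property~$A$ functions via a cocycle, then estimate by the triangle inequality.  The paper packages the stabilizer input through the lifting Lemma~\ref{induction}, producing $\nu^t:\Gamma\to\Prob(\Gamma)$ and writing $\mu_x(g)=\sum_t\eta_{x\cdot O}(g\cdot t)\,\nu^t_{z_g^{-1}x}(a_g)$; you instead keep the stabilizer functions on $\Gamma_{x_O}$ and push forward by $t_b$, which is the same construction in slightly cleaner coordinates.

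One bookkeeping slip: after reindexing $s\nu_g$ by $c=sb$ the twist you actually need to absorb is $\sigma(s,c)=\sigma(s,sb)=(t_{sb})^{-1}s\,t_b$ for $b\in F_0$, whereas your $E'_O=\{\sigma(s,b):s\in E,\,b\in F_0\cap O\}$ collects $(t_b)^{-1}s\,t_{s^{-1}b}$ instead.  These are different elements.  The repair is immediate---let $b$ range over the finite set $EF_0\cap O$, or (since $\sigma(s,sb)=\sigma(s^{-1},b)^{-1}$ when $E=E^{-1}$) symmetrize $E'_O$.  The paper's choice $E^t=\{z_{sg}^{-1}sz_g:s\in E,\,z_g\cdot t\in F\}$ is exactly your corrected set in its notation.
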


Since every subgroup of a Property $A$ group has Property $A$ we only
need to prove that if every vertex stabliser has Property $A$ then so
does $\Gamma$. To do so we will have to inflate the Property $A$
functions for the stabilisers to functions defined on the whole group.
After first establishing relevant notation, we shall accomplish this
in the next lemma.

Let $\Delta$ be a subgroup of a group $\Gamma$.  Choose a set $Z$ of
coset representatives for the right cosets of $\Delta$ in $\Gamma$.
Thus, every $g\in \Gamma$ has a unique representation
\begin{equation*}
  g = z_g a_g, \quad\text{$z_g\in Z$, $a_g\in \Delta$}.
\end{equation*}
These satisfy the following properties:
\begin{eqnarray}
   z_{gk} ( a_{gk}a_k^{-1} ) &=& gz_k,
        \quad \text{for $g$, $k\in \Gamma$}\notag \\
   a_{gh} &=& a_g h,  
        \quad \text{for $g\in \Gamma$, $h\in \Delta$} \label{cosetreps} \\
   z_{gh} &=& z_g,    \quad  \text{for $g\in \Gamma$, $h\in \Delta$}; \notag
\end{eqnarray}
indeed, the first follows from $z_{gk}a_{gk} = gk = gz_ka_k$
and the others from $z_{gh}a_{gh}=gh=z_g a_gh$ together with
$a_gh\in \Delta$.

\begin{lem}\label{induction}
  Suppose $\Delta$ is a subgroup of a group $\Gamma$ and that $\Delta$ has Property
  $A$.  For every finite subset $E\subset \Delta$ and every $\eps>0$ there
  exists a finite subset $F\subset \Delta$ and a function 
  $\nu : \Gamma\to\Prob(\Gamma)$ such that
  \begin{nlist}
    \item $\supp(\nu_g)\subset F$, for every $g\in \Gamma$,
    \item $\| h\cdot \nu_g - \nu_{hg} \| < \eps$, for every $h\in E$
      and every $g\in \Gamma$. 
  \end{nlist}
\end{lem}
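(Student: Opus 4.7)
The plan is to inflate the Property $A$ data on $\Delta$ to $\Gamma$ via an appropriate transversal. The decomposition $g = z_g a_g$ just introduced is designed so that $z_{gh} = z_g$ and $a_{gh} = a_g h$ for $h \in \Delta$, meaning that the $\Delta$-component $a_g$ transforms equivariantly under \emph{right} multiplication by $\Delta$. Since the present lemma concerns left multiplication by elements of $\Delta$, I work instead with a transversal for the opposite class of cosets: choose $Y \subset \Gamma$ so that $\Gamma = \bigsqcup_{y \in Y} \Delta y$, giving a unique decomposition $g = b_g y_g$ with $b_g \in \Delta$ and $y_g \in Y$. The crucial observation is that for $h \in \Delta$ one has $hg = (hb_g) y_g$, so by uniqueness $y_{hg} = y_g$ and $b_{hg} = hb_g$; in other words, the map $g \mapsto b_g$ intertwines the left $\Delta$-action on $\Gamma$ with the left regular action of $\Delta$ on itself.

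Next, applying Property $A$ of $\Delta$ to the given $E$ and $\eps$ provides a finite subset $F \subset \Delta$ and a function $\mu \colon \Delta \to \Prob(\Delta)$ with $\supp \mu_a \subset F$ and $\| h \cdot \mu_a - \mu_{ha}\| < \eps$ for every $h \in E$ and $a \in \Delta$. I then define $\nu \colon \Gamma \to \Prob(\Gamma)$ by $\nu_g := \mu_{b_g}$, regarding $\mu_{b_g} \in \Prob(\Delta)$ as a probability measure on $\Gamma$ supported in $\Delta \subset \Gamma$. Conclusion (1) of the lemma is immediate since $\supp \nu_g = \supp \mu_{b_g} \subset F$. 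For conclusion (2), fix $h \in E$ and $g \in \Gamma$; translation by $h \in \Delta$ takes measures supported in $\Delta$ to measures supported in $\Delta$, so $h \cdot \nu_g = h \cdot \mu_{b_g}$, and the equivariance of $b$ yields $\nu_{hg} = \mu_{b_{hg}} = \mu_{hb_g}$. Consequently,
\begin{equation*}
  \| h \cdot \nu_g - \nu_{hg}\|_{\ell_1(\Gamma)} = \| h \cdot \mu_{b_g} - \mu_{hb_g}\|_{\ell_1(\Delta)} < \eps,
\end{equation*}
as required.

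The argument presents no substantive difficulty beyond selecting the correct transversal. Everything is a direct inflation of the Property $A$ functions for $\Delta$ along the quotient $\Gamma \to \Delta \backslash \Gamma$; the conceptual point to get right is that the transversal $Y$ must be adapted to left multiplication by $\Delta$ (so that its orbits are precisely the cosets $\Delta y$), which is why it cannot be taken to be the transversal $Z$ introduced before the lemma.
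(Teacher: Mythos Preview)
Your proof is correct and follows essentially the same route as the paper: both build a left-$\Delta$-equivariant retraction $\Gamma\to\Delta$ and pull back the Property~$A$ data along it. The only cosmetic difference is that the paper avoids introducing a new transversal by setting $\sigma(g)=a_{g^{-1}}^{-1}$, which coincides with your $b_g$ for the choice $Y=Z^{-1}$ --- so contrary to your closing remark, the transversal $Z$ can in fact be used, just via this inversion.
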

\begin{proof}
  We shall lift functions obtained from the assumption that $\Delta$ has
  Property $A$ from $\Delta$ to $\Gamma$ using a $\Delta$-equivariant splitting of
  the inclusion $\Delta\subset \Gamma$; we consider $\Delta$ acting on the left of
  both $\Delta$ and $\Gamma$.  Precisely, define
  \begin{equation*}
     \sigma : \Gamma\to \Delta ,\quad \sigma(g) = a_{g^{-1}}^{-1} 
  \end{equation*}
and observe that if $h\in \Delta$ we have
\begin{equation*}
  \sigma(hg) = a_{g^{-1}h^{-1}}^{-1}
             = (a_{g^{-1}}h^{-1})^{-1}
             = h a_{g^{-1}}^{-1} = h \sigma(g),
\end{equation*}
where the second equality follows from (\ref{cosetreps}).  If now $E$ and
$\eps$ are given, we obtain a function $\Delta\to \Prob(\Delta)$ as in the
definition of Property $A$ and define $\nu$ to be the composition
\begin{equation*}
  \Gamma \to \Delta \to \Prob(\Delta) \subset \Prob(\Gamma),
\end{equation*}
in which the first map is our splitting $\sigma$ and we simply view
$\Prob(\Delta)$ as the probability measures on $\Gamma$ which are supported on
$\Delta$.  The required properties are easily verified, with left
$\Delta$-equivariance of $\sigma$ used to verify the norm inequality.
\end{proof}

Now suppose that $\Gamma$ acts on a CAT(0) cube complex by cellular
isometries. As above we obtain an induced continuous action on the
space $\overline X$ of admissible vertices. Fix a transversal $T$ for
the action of $\Gamma$ on $X$; thus, $T\subset X$ contains exactly one
point from each $\Gamma$-orbit. We do not assume that $T$ is
finite. Denote the stabiliser of $t\in T$ by $\Gamma^t$. We apply the
previous notational conventions to $\Gamma^t$. In particular, fixing a
set of coset representatives $Z^t$ for $\Gamma^t$ in $\Gamma$ we have
decompositions
\begin{equation*}
  g= z_g a_g
\end{equation*}
as above, and the previous lemma applies.  As these decompositions
depend on $t\in T$, we should more properly include $t$ in the
notation and write, for example $g=z_g^t a_g^t$.  Observe
\begin{equation*}
  g\cdot t = z_ga_g\cdot t = z_g \cdot t.
\end{equation*}
Thus, the orbit map $g\mapsto g\cdot t$ restricts to a
map $Z^t\to X$, which is a bijection of $Z^t$ onto the
orbit $\Gamma t$ of $t$.

\begin{proof}[Proof of Theorem~\ref{permanence}]
  We are given a finite subset $E\subset \Gamma$ and $\eps>0$.
  Without loss of generality we assume that $E$ is closed under
  inversion and contains the identity of $\Gamma$. We must produce a
  finite subset $F\subset \Gamma$ and a function 
  $\nu:\Gamma \to\Prob(\Gamma)$ as in the definition of Property $A$.

 Applying Lemma \ref{Borel2continuous} (or, more properly, the
 subsequent remark) there is a finite subset
 $F\subset X$ and a function $\eta: X\rightarrow\Prob(X)$
 such that   

\begin{nlist}
  \item $\supp \eta_x\subset F$, for every $x\in X$;
  \item $\| s\cdot \eta_x - \eta_{sx} \| < \eps$, for every $s\in E$ and
              every $x\in X$.
\end{nlist}

Let $T_F\subset T$ be the (finite) set of representatives of those
orbits passing through $F$; in other words, $t\in T_F$ precisely when
$\Gamma \cdot t\cap F$ is nonempty.  For each $t\in T_F$ let $Z^t_F\subset Z^t$
be the (finite) subset of representatives of those $\Gamma^t$ cosets
mapping $t$ into $F$; in other words, $z\in Z^t_F$ precisely when
$z\cdot t\in F$.  
Recall here that the action on $t$ restricted to
coset representatives provides a
bijection of $Z^t$ with the orbit $\Gamma \cdot t$.  
Let $E^t\subset \Gamma^t$ be the (finite) subset
\begin{equation*}
  E^t = \{\, z^{-1}_{sg}s z_g : \text{$s\in E$, $g\in Z^t_F$} \,\}.
\end{equation*}

For each $t\in T_F$, using the hypothesis on $\Gamma^t$ apply
Lemma~\ref{induction} with  
$\Delta= \Gamma^t$ and $E=E^t$ to obtain a finite subset
$F_t\subset\Gamma^t$ and a function $\nu^t:\Gamma\to \Prob(\Gamma)$
such that 
\begin{nlist}
  \item $\supp \nu^t_g \subset F_t$, for every $g\in \Gamma$;
  \item $\| h\cdot \nu^t_g - \nu^t_{hg} \| <\eps$, for every $h\in E^t$
    and $g\in \Gamma$.
\end{nlist}

Define the required function $\mu:\Gamma\to \Prob(\Gamma)$ by choosing
a vertex $O$ as a basepoint and setting, for each $x$ and $g\in \Gamma$,

\begin{equation}
\label{Afcns}
  \mu_x(g) = \sum_{t\in T} 
       \eta_{x\cdot O} (g\cdot t)\, \nu^t_{z^{-1}_gx}(a_g).
\end{equation}
Observe that the sum is actually finite as only finitely many orbits can
cross the (finite) common support $F$ of the $\eta_{x\cdot O}$;
indeed, the sum is over $t\in T_F$.

Let us first address the finiteness of support.  For $\mu_x(g)$ to be
nonzero, there must be $t\in T_F$ for which both factors of the
corresponding summand in (\ref{Afcns}) are nonzero.  Fixing such a $t$
and decomposing $g=z_g a_g$ accordingly we obtain:  
$z_g\cdot t = g\cdot t\in F$ so that $z_g\in Z^t_F$, and also 
$a_g\in F_t$.  It follows that
\begin{equation*}
  \supp \mu_x \subset \bigcup_{t\in T_F} Z^t_F F_t,
\end{equation*}
which is a finite subset of $\Gamma$, not depending on $x$.

Let us next check that each $\mu_x$ is a probability measure.  For
these and other norm estimates below, we shall reindex sums using the
bijection $\Gamma\cong Z^t \Gamma^t$, possible for each fixed $t\in T$.  In
other words, having fixed $t\in T$, we shall replace a sum over
$g\in \Gamma$ by a double sum over $z\in Z^t$ and $g\in z\Gamma^t$ and may
identify the latter as a sum over $\Gamma^t$.
We proceed, recalling that $\eta_{x\cdot O}$ and the 
$\nu^t_{(\cdot)}$ are probability measures, hence $[0,1]$-valued,
\begin{align*}
  \| \mu_x \|_{\ell^1(\Gamma)} &= \sum_{g\in \Gamma} \mu_x(g)
     = \sum_{g\in \Gamma}\sum_{t\in T} \eta_{x\cdot O}(g\cdot t)\, 
                \nu^t_{z^{-1}_gx}(a_g)\\
     &= \sum_{t\in T} \sum_{z\in Z^t} \eta_{x\cdot O} (z\cdot t) 
             \sum_{g\in z\Gamma^t} \nu^t_{z^{-1}x}(a_g) \\
     &= \sum_{t\in T}\sum_{z\in Z^t} \eta_{x\cdot O}(z\cdot t)
      = \sum_{t\in T}\sum_{y\in \Gamma\cdot t} \eta_{x\cdot O}(y) = 1;
\end{align*}
in the second line we use that $g\cdot t=z\cdot t$ for 
$g\in z\Gamma^t$ and that for $g\in\Gamma$ the condition $g\in
z\Gamma^t$ is equivalent to $z_g=z$;
in the third line,  observing that the condition $g\in z\Gamma^t$ is
equivalent to $g=zh$ with $h=a_g$ ranging over the stabiliser $\Gamma^t$,
the sum becomes $\sum_{h\in \Gamma^t} \nu^t_{z^{-1}x}(h) =1$ since
$\nu^t_{z^{-1}x}$ is a probability measure; also
as $z$ ranges over the coset representatives $Z^t$ the value
of $z\cdot t$ ranges over the orbit $\Gamma\cdot t$.

Finally, we check the almost invariance condition.  
We are to estimate
\begin{align*}
  \| s\cdot \mu_x - \mu_{sx} \|_{\ell^1(\Gamma)} &= 
          \sum_{g\in \Gamma}| \mu_{x}(s^{-1}g)-\mu_{sx}(g)| \\
     &\leq \sum_{g\in \Gamma} \sum_{t\in T} 
        \bigl| \eta_{x\cdot O}(z_{s^{-1}g}\cdot t)\ \nu^t_{z^{-1}_{s^{-1}g}x}(a_{s^{-1}g})
             - \eta_{sx\cdot O} (z_g\cdot t)\ \nu^t_{z^{-1}_g sx}(a_g) \bigr|,
\end{align*}
independent of $x\in\Gamma$ and $s\in E$.
We estimate the summand using the triangle inequality
\begin{equation}\label{messy}
    \eta_{x\cdot O}(z_{s^{-1}g}\cdot t)
\left|\nu^t_{z^{-1}_{s^{-1}g}x}(a_{s^{-1}g}) - \nu^t_{z^{-1}_gsx}(a_g)\right|
        +  \left|  \eta_{x\cdot O}(z_{s^{-1}g}\cdot t) - 
           \eta_{sx\cdot O} (z_g\cdot t) \right| \nu^t_{z^{-1}_g sx}(a_g)
\end{equation}
and shall proceed to estimate each term in this expression (or, more accurately,
their sums over $g\in \Gamma$ and $t\in T$).
To estimate the term on the right, observe that for $g$ and $h\in \Gamma$
we have that $z_{gh}\cdot t = gh\cdot t = g z_h\cdot t$
(where all decompositions are with respect to $\Gamma^t$).  Hence, 
fixing $t\in T$ and arguing as above we have
\begin{align*}
  \sum_{g\in \Gamma}
     \left|  \eta_{x\cdot O}(z_{s^{-1}g}\cdot t) - 
           \eta_{sx\cdot O} (z_g\cdot t) \right| 
               \nu^t_{z^{-1}_g sx}(a_g)
   &= \sum_{g\in \Gamma} 
         \left|  \eta_{x\cdot O}(s^{-1} z_{g}\cdot t) - 
           \eta_{sx\cdot O} (z_g\cdot t) \right| \nu^t_{z^{-1}_g sx}(a_g) \\
   &=\sum_{z\in Z^t} \left| \eta_{x\cdot O}(s^{-1} z\cdot t) - 
            \eta_{sx\cdot O} (z\cdot t) \right| 
              \sum_{g\in z\Gamma^t}  \nu^t_{z^{-1} sx}(a_g) \\
   &=\sum_{z\in Z^t} \left| s\cdot \eta_{x\cdot O}(z\cdot t)-
             \eta_{sx\cdot O}(z\cdot t) \right|.
\end{align*}
Taking now the sum
over $t\in T$ and using the assumption that $s\in E$ we have estimated
the right hand term in  (\ref{messy}) by 
\begin{equation*}
  \| s \cdot \eta_{x\cdot O} - \eta_{sx\cdot O} \|_{\ell^1(X)} < \eps.
\end{equation*}
It remains only to estimate the left hand term in (\ref{messy}). 
Again, fix $t\in T$ and reindex the sum over $g\in\Gamma$:
\begin{align}
 \sum_{g\in \Gamma} \eta_{x\cdot O}(z_{s^{-1}g}\cdot t) 
           &\left|\nu^t_{z^{-1}_{s^{-1}g}x}(a_{s^{-1}g}) - 
               \nu^t_{z^{-1}_gsx}(a_g)\right| =  \label{oops} \\
 &=\sum_{z\in Z^t} \sum_{g\in z\Gamma^t}  
   \eta_{x\cdot O}({s^{-1}z}\cdot t)
        \left|\nu^t_{z^{-1}_{s^{-1}z}x}(a_{s^{-1}g}) - 
             \nu^t_{z^{-1}sx}(a_g)\right|; \notag
\end{align}
here, we use the fact that for $g\in z\Gamma^t$ we have
$g\cdot t=z\cdot t$, so that also 
$z_{s^{-1}g}\cdot t = s^{-1}g\cdot t = s^{-1}z\cdot t$.
It follows, in particular, that for $g\in z\Gamma^t$ we have 
$z_{s^{-1}g}=z_{s^{-1}z}$.  Hence, setting
\begin{equation*}
  a = z^{-1}_{s^{-1}g} x = z^{-1}_{s^{-1}z} x, 
     \quad b = z^{-1}_g sx = z^{-1}sx, \quad
        c = ba^{-1} = z^{-1} s z_{s^{-1}z},
\end{equation*}
we see that $a$ and $b$ depend only on $s$, $z$ and $x$, whereas $c$ depends
only on $s$ and $z$.  The calculation
\begin{equation*}
  c\cdot t = z^{-1} s z_{s^{-1}g}\cdot t = 
        z^{-1} s {s^{-1}g} \cdot t = z^{-1}g\cdot t = t.
\end{equation*}
shows that $c\in\Gamma^t$.  Further, we claim that if the summand in
(\ref{oops}) corresponding to a particular $g\in z\Gamma^t$ is nonzero
then $c\in E^t$.   Indeed, if the summand is nonzero then necessarily
$s^{-1}z\cdot t\in F$ or, in other words, $h=z_{s^{-1}z}\in Z^t_F$.  
Now, by evaluating on $t$ we see that $z_{sh}=z$:
\begin{equation*}
  z_{sh}\cdot t = sh\cdot t = s z_{s^{-1}z}\cdot t 
     = s s^{-1}z\cdot t = z \cdot t.
\end{equation*}
Hence, 
\begin{equation*}
  c = z^{-1} s z_{s^{-1}z} = z_{sh}^{-1} s h \in E^t.
\end{equation*}
Putting everything together, using the final small calculation 
$a_g = c a_{s^{-1}g}$, and summing over the nonzero terms in
(\ref{oops}) we obtain
\begin{align*}
   \sum_{z\in Z^t} \eta_{x\cdot O}({s^{-1}z}\cdot t)
      \sum_{g\in z\Gamma^t}  
        \left|\nu^t_{a}(c^{-1}a_{g}) - 
             \nu^t_{ca}(a_g)\right| &=
  \sum_{z\in Z^t} \eta_{x\cdot O}({s^{-1}z}\cdot t)
        \| c\cdot \nu^t_a - \nu^t_{ca} \|_{\ell^1(\Gamma^t)} \\
  &\leq \varepsilon \sum_{z\in Z^t} \eta_{x\cdot O}({s^{-1}z}\cdot t)
\end{align*}
where the estimate comes from the assumptions on $\nu^t$.  Summing
further over $t\in T$, and recalling that $\eta_{x\cdot O}$ is a
probability measure, we have estimated the left hand term in
(\ref{messy}).  
\end{proof}

\begin{rem}
  The formula used to define $\mu$ in the proof reduces to the formula
  used in the previous paper \cite{BCGNW} in the case when the
  stabilizers $\Gamma^t$ are finite, and the functions $\nu^t$ are
  taken to be constant at the uniform probability measure on
  $\Gamma^t$; in other words,
  \begin{equation*}
    \nu^t_g(h) = \begin{cases} |\Gamma^t|^{-1}, &h\in \Gamma^t \\
                               0, &h\notin \Gamma^t. \end{cases}
 \end{equation*}
Such $\nu^t$ satisfies the conclusions of the previous lemma, so is
allowed.
\end{rem}

\begin{rem}
  Ozawa's original treatment constructs a space on which $\Gamma$ will
  act amenably \cite{ozawa}.  We have chosen to avoid the formulation
  in terms of amenable actions because the method seldom produces a
  reasonable space.  It is worth noting, however, that if all
  stabilisers are finite, or even amenable, $\Gamma$ will act amenably
  on $\overline X$.  If, in addition the complex $X$ is locally
  finite, $\Gamma$ will act amenably on the boundary comprised of
  ideal vertices.
\end{rem}

\begin{rem}
  In the locally finite case the result follows from standard
  permanence results, found for example in \cite{kirchberg-wassermann}.
\end{rem}

\section{Artin groups}\label{artin}

A Coxeter matrix is a symmetric matrix $M$, with rows and columns
indexed by a not necessarily finite set $I$, and with matrix elements
$M_{ij}\in \mathbb N\cup \{\infty\}$ satisfying $M_{ii}=1$ for all
$i\in I$. Let $S =\{s_i\mid i\in I\}$ be a set in bijective
correspondence with $I$.  The {\it Coxeter group corresponding to the
  Coxeter matrix $M$\/} is defined by the presentation
\begin{equation*}
  \langle\, S \mid (s_is_j)^{M_{ij}}=1, \forall i,j\in I \,\rangle.
\end{equation*}
The {\it Artin group corresponding to the Coxeter matrix $M$\/} is
defined by the presentation
\begin{equation*}
  \langle\, S \mid  
     (s_is_j)_{M_{ij}}=(s_js_i)_{M_{ij}}, \forall i,j\in I \,\rangle,
\end{equation*}
where $(s_is_j)_{M_{ij}}$ denotes the alternating word
$s_is_js_is_j\ldots s_i$ with ${M_{ij}}$ letters if ${M_{ij}}$ is odd
and the alternating word $s_is_js_is_j\ldots s_is_j$ with ${M_{ij}}$
letters if ${M_{ij}}$ is even.  Considering the equivalent
presentation 
\begin{equation*}
  \langle\, S \mid 
    s_i^2=1, (s_is_j)_{M_{ij}}=(s_js_i)_{M_{ij}}, \forall i,j\in I \,\rangle
\end{equation*}
for the Coxeter group we see that the obvious identification of the
generating sets extends to a surjective homomorphism of the Artin
group onto the Coxeter group with kernel the normal subgroup generated
by the squares of the generators.

For each subset $J\subset I$ denote $S_J=\{s_i, i\in J\}$.  The
subgroup of the Coxeter group generated by $S_J$ is a {\it
  parabolic\/} subgroup.  A parabolic subgroup is a Coxeter group in
its own right -- while not obvious, its presentation is obtained by
deleting from the Coxeter group presentation all generators not in
$S_J$ and all relators involving the deleted generators.  A Coxeter
group, or one of its parabolic subgroups, is {\it spherical\/} if it
is a finite group.

By van der Leck's theorem \cite{Lek} similar statements hold for Artin groups.  The subgroup generated by
$S_J$ is a {\it parabolic\/} subgroup, which is itself an Artin group,
with presentation obtained from the Artin group presentation by
deleting all generators not in $S_J$ and
all relators involving the deleted generators.  An Artin group, or one
of its parabolic subgroups, is of {\it finite type\/} if the
corresponding parabolic subgroup of the Coxeter group is spherical.

A finite type parabolic subgroup of an Artin group is not necessarily
finite. For example if we take the Klein $4$-group, with presentation
\begin{equation*}
  \langle\, s_1, s_2\mid s_1^2,s_2^2,(s_1s_2)^{2} \,\rangle,
\end{equation*}
as our Coxeter group then the associated Artin group has presentation
\begin{equation*}
  \langle\, s_1, s_2 \mid s_1s_2=s_2s_1 \,\rangle.
\end{equation*}
It is free abelian of rank $2$. Since the Klein $4$-group is finite
the entire Artin group is of finite type but clearly not finite.

An Artin group is of {\it type $FC$\/} if the following condition
holds:  whenever $J\subset I$ has the property that the parabolic
subgroups $\langle\, s_i, s_j \,\rangle$ are of finite type for every
pair $i$, $j\in J$ then the parabolic subgroup generated by $S_J$ is
itself of finite type.  Equivalently, given a Coxeter matrix $M$, let
$G$ be the graph with vertex set $I$ and an edge joining $i$ to $j$
whenever the generators $s_i$ and $s_j$ generate a spherical Coxeter
group. The Artin group corresponding to $M$ is of type FC if for every
clique (complete subgraph) in $G$ the corresponding parabolic subgroup
is of finite type.

Charney and Davis have shown that an Artin group can be exhibited as a
complex of groups in which the underlying complex admits a natural
cubical structure \cite{CD}.  Further, they showed that the cube
complex is developable, and is locally $\CAT(0)$ if and only if the
Artin group is of type $FC$.  It follows that when the Artin group is
of type $FC$ the developed cover is a $\CAT(0)$ cube complex on
which the Artin group acts. The vertex stabilisers of this action are,
by construction, the parabolic subgroups of finite type. 
Hence an Artin group of
type $FC$ will act on a finite dimensional $\CAT(0)$ cubical complex with finite type vertex stabilisers. 

Now according to a result of of Cohen and Wales (and, independently,
of Digne), Artin groups of finite type are linear \cite{CW} so that, 
appealing to the theorem of Guentner, Higson and Weinberger,
they are exact \cite{GHW}.  Observing that an Artin group is the
direct union of its finitely generated parabolic subgroups, which are themselves
Artin groups, we obtain as a consequence of Theorem~\ref{permanence}:

\begin{thm}
  An Artin group of type FC is exact. \qed
\end{thm}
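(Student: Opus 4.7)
The plan is to reduce to the finitely generated case via an ascending-union argument and then to deduce exactness for each finitely generated piece by combining Theorem~\ref{permanence} with the cubing of Charney--Davis \cite{CD}.

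First I would exhibit any Artin group $A$ of type FC as the directed union of the finitely generated parabolic subgroups $A_J$, where $J$ ranges over the finite subsets of the index set $I$. By van der Lek's theorem \cite{Lek}, each $A_J$ is itself an Artin group with defining Coxeter matrix obtained by restriction. Moreover type FC is inherited by $A_J$: any clique in the graph associated to the restricted matrix is automatically a clique in the ambient graph $G$, so each clique in the restricted graph gives rise to a finite-type parabolic of $A$, which is in turn a parabolic of $A_J$. Since Property $A$ is preserved under directed unions of subgroups, the theorem will follow once each $A_J$ is shown to be exact.

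For the finitely generated case I would invoke the Charney--Davis construction to obtain a finite dimensional $\CAT(0)$ cube complex $X$ on which $A_J$ acts cellularly, with vertex stabilizers equal to the finite-type parabolic subgroups of $A_J$. These stabilizers are themselves Artin groups of finite type; by Cohen--Wales \cite{CW} (or Digne) they admit faithful linear representations, and by the Guentner--Higson--Weinberger theorem \cite{GHW} they are exact. All hypotheses of Theorem~\ref{permanence} are now in place and its application shows that $A_J$ is exact.

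The main obstacle, and really the only non-formal step, is arranging the ingredients so that Theorem~\ref{permanence} genuinely applies: one must know that the Charney--Davis cubing is finite dimensional (this comes from the fact that cubes of maximal dimension correspond to maximal spherical parabolics, whose ranks are bounded by hypothesis), and that its vertex stabilizers are precisely the finite-type parabolics of $A_J$ rather than proper subgroups of them. Once these facts are cited from \cite{CD}, the permanence step in Section~\ref{sec:permanence} does all the remaining work, and the directed-union step propagates exactness to the general (possibly uncountable) Artin group of type FC.
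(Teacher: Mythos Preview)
Your proposal is correct and follows essentially the same route as the paper: reduce via the directed union of finitely generated parabolics (each again of type FC), invoke the Charney--Davis cubing with finite-type parabolic stabilisers, and combine Cohen--Wales/Digne linearity with \cite{GHW} to feed into Theorem~\ref{permanence}. One minor caveat is your closing reference to the ``possibly uncountable'' case, which overshoots slightly since Property~$A$ as formulated in this paper is a statement about countable discrete groups.
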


\end{document}